\documentclass[11pt,a4paper]{article}

\usepackage{inputenc}
\usepackage{amsmath}
\usepackage{bm}
\usepackage{bbold}
\usepackage{amsthm}

\usepackage{hyperref}

\setlength{\mathsurround}{1pt}

\title{A New Algebraic Solution\\ to Multidimensional Minimax Location Problems with Chebyshev Distance\thanks{WSEAS Transactions on Mathematics, 2012. Vol.~11, no.~7, pp.~605-614.}
}

\author{Nikolai Krivulin\thanks{Faculty of Mathematics and Mechanics, St.~Petersburg State University, 28 Universitetsky Ave., St.~Petersburg, 198504, Russia, 
nkk@math.spbu.ru}
}

\date{}

\newtheorem{theorem}{Theorem}
\newtheorem{lemma}{Lemma}
\newtheorem{corollary}{Corollary}

\begin{document}

\maketitle

\begin{abstract}
Both unconstrained and constrained minimax single facility location problems are considered in multidimensional space with Chebyshev distance. A new solution approach is proposed within the framework of idempotent algebra to reduce the problems to solving linear vector equations and minimizing functionals defined on some idempotent semimodule. The approach offers a solution in a closed form that actually involves performing matrix-vector multiplications in terms of idempotent algebra for appropriate matrices and vectors. To illustrate the solution procedures, numerical and graphical examples of two-dimensional problems are given.
\\

\textit{Key-Words:} single facility location problem, Chebyshev distance, idempotent semifield, linear equation
\end{abstract}

\section{Introduction}

In the area of optimization, location problems  \cite{Eiselt11Pioneering} constitute a research domain of continuing interest that goes back to the seventeenth century and classical works of P.~Fermat, E.~Torricelli, J.~J.~Sylvester, J.~Steiner, and A.~Weber. Many results in the domain are recognized as important contributions to various research fields including integer programming, combinatorial and graph optimization \cite{Eiselt11Pioneering,Elzinga72Geometrical,Hansen81Constrained,Sule01Logistics,Moradi09Single,Drezner11Continuous}.

Models and methods of idempotent algebra, i.e., the linear algebra over semirings with idempotent addition, are among the approaches developed to attack location problems. Besides that, new applications of idempotent algebra to real-life problems in engineering, manufacturing, information technology, and other fields are arising and expanding \cite{Baccelli92Synchronization,Cuninghamegreen94Minimax,Kolokoltsov97Idempotent,Litvinov98Correspondence,Golan03Semirings,Heidergott05Maxplus,Butkovic10Maxlinear}. In terms of idempotent algebra, a range of problems that are nonlinear in the ordinary sense, become linear, and so more tractable for analysis and solution. 

Idempotent algebra based approaches prove to be useful for solving certain optimization problems, including idempotent analogues of linear programming problems and their extensions \cite{Butkovic10Maxlinear,Gaubert12Tropical}, as well as some location problems \cite{Cuninghamegreen94Minimax,Zimmermann03Disjunctive,Tharwat08Oneclass}. A single facility location problem on a graph is examined in \cite{Cuninghamegreen94Minimax}, where it is represented as a problem of minimizing a rational function of one variable in the idempotent algebra sense. However, the proposed solution of one-dimensional problems does not seem to be applicable in the multidimensional case.

In \cite{Zimmermann03Disjunctive,Tharwat08Oneclass}, a multidimensional constrained location problem on a graph is considered which has an objective function that takes the form of a maximum of functions each depending only on one variable, and so is called max-separable. Though an efficient technique to solve the problem is developed, this technique seems to be not suitable for problems with objective functions other than max-separable ones.   

An algebraic approach to multidimensional minimax single facility location problems with Chebyshev and rectilinear distances is proposed in \cite{Krivulin11Algebraic,Krivulin11Analgebraic,Krivulin11Anextremal,Krivulin11Algebraicsolution}. The approach is based on new results in the spectral theory in idempotent algebra, including extremal properties of eigenvalues for irreducible matrices. The solution reduces to minimization of functionals defined on idempotent semimodules and involves evaluation of the eigenvalue and eigenvectors of a matrix. 

In this paper, we present a new algebraic solution to both unconstrained and constrained minimax location problems with Chebyshev distance. The proposed approach mainly exploits methods and techniques of solving linear vector equations developed in \cite{Krivulin05Onsolution,Krivulin06Solution,Krivulin09Methods} rather than results from spectral theory. The approach offers a closed-form solution that involves performing a few usual matrix-vector operations. 

The rest of the paper is organized as follows. We begin with a brief overview of some concepts, definitions and notations of idempotent algebra, including idempotent semifields and semimodules, algebra of matrices, and linear vector equations. As another prerequisite, general solutions of linear vector equations are also outlined. Furthermore, solutions to some optimization problems are obtained to provide the basis for the analysis of location problems below.
 
We consider an unconstrained location problem and present a new solution, which appear to be more direct and somewhat simpler than that in \cite{Krivulin11Algebraic,Krivulin11Analgebraic,Krivulin11Anextremal,Krivulin11Algebraicsolution}. New constrained location problems are then examined and explicit solutions to the problems are given. Finally, numerical examples of solving both unconstrained and constrained problem in two-dimensional space are shown to illustrate the results.

\section{Definitions and Notations}

We start with a brief introduction to idempotent algebra so as to outline basic concepts, definitions and notations that together underlie results presented in the  paper. Further related details can be found in \cite{Baccelli92Synchronization,Kolokoltsov97Idempotent,Litvinov98Correspondence,Cuninghamegreen94Minimax,Golan03Semirings,Heidergott05Maxplus,Butkovic10Maxlinear}.

\subsection{Idempotent Semifield}

Consider a set $\mathbb{X}$ that is closed under addition $\oplus$ and multiplication $\otimes$ and has zero $\mathbb{0}$ and identity $\mathbb{1}$. We assume $(\mathbb{X},\mathbb{0},\mathbb{1},\oplus,\otimes)$ to be a commutative semiring, where addition is idempotent and multiplication is invertible. Since the nonzero elements in $\mathbb{X}$ form a group under multiplication, the semiring is usually referred to as the idempotent semifield. 

The power notation is introduced in the ordinary way. We define $\mathbb{X}_{+}=\mathbb{X}\setminus\{\mathbb{0}\}$. For any $x\in\mathbb{X}_{+}$ and any integer $p>0$, we have $x^{0}=\mathbb{1}$, $\mathbb{0}^{p}=\mathbb{0}$, and
$$
x^{p}=x^{p-1}\otimes x=x\otimes x^{p-1},
\qquad
x^{-p}=(x^{-1})^{p}.
$$

It is assumed that in the semifield, the integer power can be extended to the case of rational exponents, and so the semifield is taken to be radicable.

From here on, as it is customary in ordinary algebra, we drop the multiplication sign $\otimes$. The power notation is meant in the sense of idempotent algebra.

The idempotent addition induces a partial order $\leq$ such that $x\leq y$ if and only if $x\oplus y=y$. From this definition it follows that
$$
x\leq x\oplus y,
\qquad
y\leq x\oplus y,
$$
and that both addition and multiplication are isotonic.

Furthermore, we suppose that it is possible to complete the partial order into a linear order and consider the semifield as totally ordered. In what follows, the relation signs and the symbols $\min$ and $\max$ are thought of as referring to this linear order.

As an example, consider the idempotent semifield of real numbers
$$
\mathbb{R}_{\max,+}
=
(\mathbb{R}\cup\{-\infty\},-\infty,0,\max,+).
$$

In the semifield $\mathbb{R}_{\max,+}$, its null and identity elements are defined as $\mathbb{0}=-\infty$ and $\mathbb{1}=0$. For each $x\in\mathbb{R}$, there exists its inverse $x^{-1}$ equal to $-x$ in conventional arithmetic. For any $x,y\in\mathbb{R}$, the power $x^{y}$ corresponds to the arithmetic product $xy$. The partial order induced by the idempotent addition coincides with the natural linear order on $\mathbb{R}$.

\subsection{Idempotent Semimodule}

Vector operations are routinely introduced based on the scalar addition and multiplication defined on $\mathbb{X}$. Consider the Cartesian power $\mathbb{X}^{n}$ with its elements represented as column vectors. For any two vectors $\bm{x}=(x_{i})$ and $\bm{y}=(y_{i})$, and a scalar $c\in\mathbb{X}$, vector addition and multiplication by scalars follow the rules
$$
\{\bm{x}\oplus\bm{y}\}_{i}
=
x_{i}\oplus y_{i},
\qquad
\{c\bm{x}\}_{i}
=
cx_{i}.
$$ 

Fig.~\ref{F-VASM} illustrates the operations in $\mathbb{R}_{\max,+}^{2}$. Addition of two vectors on the plane is subject to a ``rectangle rule'' that defines the sum as a diagonal of a rectangle formed by coordinate axes together with horizontal and vertical lines drawn through the end points of the vectors. Scalar multiplication is equivalent to a shift along a line directed at $45^{\circ}$ angle to the axes. 
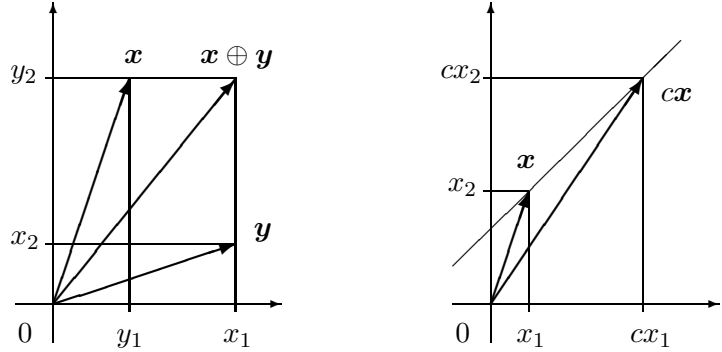
\begin{figure}[ht]
\setlength{\unitlength}{1mm}
\begin{center}
\begin{picture}(35,45)

\put(0,5){\vector(1,0){35}}
\put(5,0){\vector(0,1){45}}

\put(5,5){\thicklines\vector(1,3){10}}
\put(15,35){\line(0,-1){31}}

\put(5,5){\thicklines\vector(3,1){24}}
\put(29,13){\line(-1,0){25}}

\put(5,5){\thicklines\line(4,5){24}}
\put(26,31.75){\thicklines\vector(1,1){3}}

\put(29,35){\line(-1,0){25}}

\put(29,35){\line(0,-1){31}}

\put(0,0){$0$}

\put(13,0){$y_{1}$}
\put(27,0){$x_{1}$}

\put(-1,13){$x_{2}$}
\put(-1,35){$y_{2}$}

\put(14,37){$\bm{x}$}

\put(31,14){$\bm{y}$}

\put(24,37){$\bm{x}\oplus\bm{y}$}

\end{picture}
\hspace{20\unitlength}
\begin{picture}(35,45)

\put(0,5){\vector(1,0){35}}
\put(5,0){\vector(0,1){45}}

\put(5,5){\thicklines\vector(1,3){5}}
\put(10,20){\line(0,-1){16}}
\put(10,20){\line(-1,0){6}}

\put(5,5){\thicklines\vector(2,3){20}}
\put(25,35){\line(0,-1){31}}
\put(25,35){\line(-1,0){21}}

\put(0,10){\line(1,1){30}}

\put(0,0){$0$}
\put(8,23){$\bm{x}$}
\put(27,32){$c\bm{x}$}

\put(-1,20){$x_{2}$}
\put(-2,35){$cx_{2}$}

\put(8,0){$x_{1}$}
\put(23,0){$cx_{1}$}

\end{picture}
\end{center}
\caption{Vector addition (left) and scalar multiplication (right) in $\mathbb{R}_{\max,+}^{2}$.}\label{F-VASM}
\end{figure}

The set $\mathbb{X}^{n}$ with these operations is a semimodule over the idempotent semifield $\mathbb{X}$.

Both vector addition and scalar multiplication are isotone operations in every arguments. 

A vector with all zero entries is referred to as the zero or null vector and denoted by $\mathbb{0}$.

A vector is called regular if it has no zero entries. The set of all regular vectors of order $n$ over $\mathbb{X}$ is $\mathbb{X}_{+}^{n}$.

A vector $\bm{y}\in\mathbb{X}^{n}$ is linearly dependent on vectors $\bm{x}_{1},\ldots,\bm{x}_{m}\in\mathbb{X}^{n}$, if there are scalars $c_{1},\ldots,c_{m}\in\mathbb{X}$ such that
$$
\bm{y}
=
c_{1}\bm{x}_{1}\oplus\cdots\oplus c_{m}\bm{x}_{m}.
$$

A vector $\bm{y}$ is collinear with $\bm{x}$, if $\bm{y}=c\bm{x}$.

For any vectors $\bm{x}_{1},\ldots,\bm{x}_{m}\in\mathbb{X}^{n}$, their linear span is defined as the set
$$
\mathop\mathrm{span}(\bm{x}_{1},\ldots,\bm{x}_{m})
=
\left\{\left.\bigoplus_{i=1}^{m}c_{i}\bm{x}_{i}\right|c_{1},\ldots,c_{m}\in\mathbb{X}\right\}.
$$

An example of the linear span of two vectors $\bm{x}_{1}$ and $\bm{x}_{2}$ in $\mathbb{R}_{\max,+}^{2}$ is given in Fig.~\ref{F-LS}, where $\mathop\mathrm{span}(\bm{x}_{1},\bm{x}_{2})$ is a region between two thick lines going through the end points of the vectors at $45^{\circ}$ angle to the axes of Cartesian coordinates.
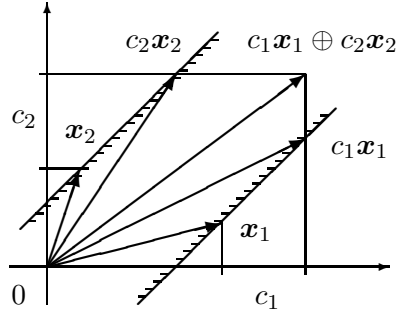
\begin{figure}[ht]
\setlength{\unitlength}{1mm}
\begin{center}
\begin{picture}(50,40)

\put(0,5){\vector(1,0){50}}
\put(5,0){\vector(0,1){40}}

\put(5,5){\thicklines\vector(1,3){4.25}}
\put(5,5){\thicklines\vector(2,3){17}}

\put(5,5){\thicklines\vector(4,1){23}}
\put(5,5){\thicklines\vector(2,1){34}}

\put(1.5,10){\thicklines\line(1,1){26}}
\multiput(2.5,11)(1,1){25}{\line(1,0){1}}

\put(17,0){\thicklines\line(1,1){26}}
\multiput(18,1)(1,1){25}{\line(-1,0){1}}

\put(39,30.5){\line(-1,0){35}}
\put(39,30.5){\line(0,-1){26.5}}

\put(10,18){\line(-1,0){6}}
\put(28,11){\line(0,-1){7}}

\put(5,5){\thicklines\vector(4,3){34}}

\put(0,0){$0$}

\put(7,22){$\bm{x}_{2}$}
\put(0,24){$c_{2}$}
\put(15,34){$c_{2}\bm{x}_{2}$}

\put(30,9){$\bm{x}_{1}$}
\put(32,0){$c_{1}$}
\put(42,20){$c_{1}\bm{x}_{1}$}

\put(31,34){$c_{1}\bm{x}_{1}\oplus c_{2}\bm{x}_{2}$}

\end{picture}
\end{center}
\caption{A linear span of two vectors in $\mathbb{R}_{\max,+}^{2}$.}\label{F-LS}
\end{figure}

For any nonzero column vector $\bm{x}=(x_{i})\in\mathbb{X}^{n}$, its pseudo-inverse is a row vector $\bm{x}^{-}=(x_{i}^{-})$, where $x_{i}^{-}=x_{i}^{-1}$ if $x_{i}\ne\mathbb{0}$, and $x_{i}^{-}=\mathbb{0}$ otherwise.

If $\bm{x}$ is any nonzero vector, then $\bm{x}^{-}\bm{x}=\mathbb{1}$. 

For all regular vectors $\bm{x},\bm{y}\in\mathbb{X}_{+}^{n}$, the componentwise inequality $\bm{x}\leq\bm{y}$ implies $\bm{x}^{-}\geq\bm{y}^{-}$.

The distance between any two regular vectors $\bm{x}=(x_{i})$ and $\bm{y}=(y_{i})$ is given by
$$
\rho(\bm{x},\bm{y})
=
\bm{y}^{-}\bm{x}\oplus\bm{x}^{-}\bm{y}.
$$

In the semimodule $\mathbb{R}_{\max,+}^{n}$, this distance becomes
$$
\rho(\bm{x},\bm{y})
=
\max_{1\leq i\leq n}|x_{i}-y_{i}|,
$$
and so coincides with the classical Chebyshev metric.

\subsection{Algebra of Matrices}

For conforming matrices $A=(a_{ij})$, $B=(b_{ij})$, and $C=(c_{ij})$, matrix addition and multiplication together with  multiplication by a scalar $c\in\mathbb{X}$ are performed according to the formulas
\begin{gather*}
\{A\oplus B\}_{ij}
=
a_{ij}\oplus b_{ij},
\qquad
\{B C\}_{ij}
=
\bigoplus_{k}b_{ik}c_{kj},
\\
\{cA\}_{ij}=ca_{ij}.
\end{gather*}

A matrix with all zero entries is a zero matrix which is denoted by $\mathbb{0}$.

A matrix is called regular if it has no zero rows.

Consider the set of square matrices $\mathbb{X}^{n\times n}$. Any matrix that has all off-diagonal entries equal to $\mathbb{0}$ is called diagonal. If a matrix has all entries above (below) the diagonal equal to $\mathbb{0}$, then it is triangular.

The diagonal matrix with all diagonal entries equal to $\mathbb{1}$ is the identity matrix denoted by $I$.

For any matrix $A\ne\mathbb{0}$ and an integer $p>0$, the power notation is routinely defined as
$$
A^{0}=I,
\qquad
A^{p}=A^{p-1}A=AA^{p-1}.
$$

With respect to matrix addition and multiplication, $\mathbb{X}^{n\times n}$ is an idempotent semiring with identity.

A matrix is reducible if it can be put in a block-triangular form by simultaneous permutations of rows and columns. Otherwise the matrix is irreducible.

For every matrix $A=(a_{ij})$, its trace is given by
$$
\mathop\mathrm{tr}A
=
\bigoplus_{i=1}^{n}a_{ii}.
$$


\subsection{Linear Operators and Linear Equations}

Every matrix $A\in\mathbb{X}^{m\times n}$ defines a mapping from the semimodule $\mathbb{X}^{n}$ to the semimodule $\mathbb{X}^{m}$. Since for any vectors $\bm{x},\bm{y}\in\mathbb{X}^{n}$ and scalar $c\in\mathbb{X}$, it holds that
$$
A(\bm{x}\oplus\bm{y})
=
A\bm{x}\oplus A\bm{y},
\qquad
A(c\bm{x})
=
cA\bm{x},
$$
the mapping can be considered as a linear operator.

For given matrices $A,C\in\mathbb{X}^{m\times n}$ and vectors $\bm{b},\bm{d}\in\mathbb{X}^{m}$, a general linear equation in the unknown vector $\bm{x}\in\mathbb{X}^{n}$ is written in the form
$$
A\bm{x}\oplus\bm{b}
=
C\bm{x}\oplus\bm{d}.
$$ 

Since there is no additive inverse, one cannot rearrange the equation in such a way that all terms involving the unknown $\bm{x}$ are brought to one side of the equation while those without $\bm{x}$ go to another side.

Many practical problems reduce to solution of the following particular cases of the general equation
$$
A\bm{x}
=
\bm{d},
\qquad
A\bm{x}
=
\bm{x}.
$$

By analogy with linear integral equations, the above two equations are respectively referred to as that of the first kind and that of the second kind. The last equation is also known in the literature as the homogeneous Bellman equation.

Along with the equations, one can consider first- and second-kind inequalities that have the form
$$
A\bm{x}
\leq
\bm{d},
\qquad
A\bm{x}
\leq
\bm{x}.
$$

\section{Preliminary Results}

In this section, we present results from \cite{Krivulin05Onsolution,Krivulin06Solution,Krivulin09Methods} to be used below in the idempotent algebra based analysis of location problems. These results seem to have a less complicated form and provide better geometrical interpretation than similar algebraic solutions offered in the literature including  \cite{Baccelli92Synchronization,Cuninghamegreen94Minimax,Kolokoltsov97Idempotent,Litvinov98Correspondence,Golan03Semirings,Heidergott05Maxplus,Butkovic10Maxlinear,Gaubert08Cyclic,Butkovic09Onsome}.

\subsection{The Equation of the First Kind}

Given a matrix $A\in\mathbb{X}^{m\times n}$ and a vector $\bm{d}\in\mathbb{X}^{m}$, the problem is to find all solutions $\bm{x}\in\mathbb{X}^{n}$ of the equation
\begin{equation}
A\bm{x}
=
\bm{d}.
\label{E-Axd}
\end{equation}

A solution $\bm{x}_{1}$ to equation \eqref{E-Axd} is called the maximum solution if $\bm{x}_{1}\geq\bm{x}$ for all solutions $\bm{x}$ of \eqref{E-Axd}.

We present a solution to equation \eqref{E-Axd} based on the analysis of the distance between vectors in $\mathbb{X}^{m}$.

Let $\bm{a}_{i}$ represent column $i$ in the matrix $A$ for each $i=1,\ldots,m$. Consider a problem of evaluating the minimum distance in the sense of the metric $\rho$ from the vector $\bm{d}$ to the linear span of these columns.

Since we can write
$$
\mathop\mathrm{span}(\bm{a}_{1},\ldots,\bm{a}_{m})
\\
=
\{A\bm{x}|\bm{x}\in\mathbb{X}^{n}\},
$$
the problem is to find vectors $\bm{x}\in\mathbb{X}^{n}$ that minimize
$$
\rho(A\bm{x},\bm{d})
=
(A\bm{x})^{-}\bm{d}\oplus\bm{d}^{-}A\bm{x}.
$$

The next result gives the solution to the problem when both the matrix $A$ and the vector $\bm{d}$ are regular. 

\begin{lemma}\label{L-minAxd}
Suppose $A\in\mathbb{X}^{m\times n}$ is a regular matrix, $\bm{d}\in\mathbb{X}^{m}$ is a regular vector, and define
$$
\Delta
=
\sqrt{(A(\bm{d}^{-}A)^{-})^{-}\bm{d}}.
$$

Then it holds that
$$
\min_{\bm{x}\in\mathbb{X}_{+}^{n}}\rho(A\bm{x},\bm{d})=\Delta
$$
with the minimum attained at $\bm{x}_{0}=\Delta(\bm{d}^{-}A)^{-}$.
\end{lemma}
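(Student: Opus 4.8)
The plan is to bound $\rho(A\bm{x},\bm{d})$ from below by $\Delta$ for every regular $\bm{x}$, and then to verify that the claimed point $\bm{x}_{0}=\Delta(\bm{d}^{-}A)^{-}$ is regular and achieves this bound, so that it is actually the minimizer. Since $\rho(A\bm{x},\bm{d})=(A\bm{x})^{-}\bm{d}\oplus\bm{d}^{-}A\bm{x}$ and both summands are positive scalars whose product I will want to control, the natural first move is to observe that for regular $A\bm{x}$ and $\bm{d}$ one has $((A\bm{x})^{-}\bm{d})\cdot(\bm{d}^{-}A\bm{x})\geq\mathbb{1}$; indeed $(A\bm{x})^{-}\bm{d}$ and $\bm{d}^{-}A\bm{x}$ are, in $\mathbb{R}_{\max,+}$ terms, $\max_i(d_i-(A\bm{x})_i)$ and $\max_i((A\bm{x})_i-d_i)$, whose sum is nonnegative. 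Hence $\rho(A\bm{x},\bm{d})\geq\sqrt{((A\bm{x})^{-}\bm{d})(\bm{d}^{-}A\bm{x})}$ by the inequality between the ``arithmetic mean'' $a\oplus b$ and the ``geometric mean'' $\sqrt{ab}$ valid in a totally ordered idempotent semifield.

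Next I would simplify the geometric-mean expression. Using $(A\bm{x})^{-}\geq\bm{x}^{-}A^{-}$-type manipulations is awkward, so instead I would work directly: for any regular $\bm{x}$, the scalar $\bm{d}^{-}A\bm{x}$ is at least $\Delta^{2}/((A\bm{x})^{-}\bm{d})$ trivially, so the real content is to show
$$
((A\bm{x})^{-}\bm{d})(\bm{d}^{-}A\bm{x})
\geq
\Delta^{2}
=
(A(\bm{d}^{-}A)^{-})^{-}\bm{d}
$$
for all regular $\bm{x}$, with equality at $\bm{x}_0$. For the inequality, note $\bm{d}^{-}A\bm{x}\geq(\bm{d}^{-}A)\cdot((\bm{d}^{-}A)^{-}\text{-row})$ bounds are the wrong shape; the cleaner route is the standard residuation fact that for a regular vector $\bm{u}$ and any scalar $c$, $\bm{u}^{-}\bm{d}\geq c$ is equivalent to $\bm{d}\leq c^{-1}\bm{u}$ is equivalent to... — more precisely I would invoke that $(\bm{d}^{-}A)^{-}$ is the largest vector $\bm{x}$ with $\bm{d}^{-}A\bm{x}\leq\mathbb{1}$, equivalently $A\bm{x}\leq\bm{d}$ need not hold but $\bm{d}^{-}A(\bm{d}^{-}A)^{-}=\mathbb{1}$ since $\bm{d}^{-}A$ is a regular row vector (here regularity of $A$ and $\bm{d}$ is used). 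Then writing $\bm{x}=\lambda(\bm{d}^{-}A)^{-}\oplus\bm{x}'$ or, more simply, scaling: for arbitrary regular $\bm{x}$ set $\mu=\bm{d}^{-}A\bm{x}$; then $\bm{x}\leq\mu(\bm{d}^{-}A)^{-}$ componentwise (again by the residuation/largest-solution property of $(\cdot)^{-}$), hence $A\bm{x}\leq\mu A(\bm{d}^{-}A)^{-}$, hence $(A\bm{x})^{-}\geq\mu^{-1}(A(\bm{d}^{-}A)^{-})^{-}$, and therefore $(A\bm{x})^{-}\bm{d}\geq\mu^{-1}(A(\bm{d}^{-}A)^{-})^{-}\bm{d}=\mu^{-1}\Delta^{2}$. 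Multiplying by $\mu=\bm{d}^{-}A\bm{x}$ gives exactly $((A\bm{x})^{-}\bm{d})(\bm{d}^{-}A\bm{x})\geq\Delta^{2}$, i.e. $\rho(A\bm{x},\bm{d})\geq\Delta$.

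For attainment I would plug in $\bm{x}_{0}=\Delta(\bm{d}^{-}A)^{-}$. It is regular because $(\bm{d}^{-}A)^{-}$ is regular (a pseudo-inverse of a regular row vector, itself coming from a regular matrix and regular vector) and $\Delta\neq\mathbb{0}$. Then $\bm{d}^{-}A\bm{x}_{0}=\Delta\,\bm{d}^{-}A(\bm{d}^{-}A)^{-}=\Delta$, and $(A\bm{x}_{0})^{-}\bm{d}=\Delta^{-1}(A(\bm{d}^{-}A)^{-})^{-}\bm{d}=\Delta^{-1}\Delta^{2}=\Delta$; adding, $\rho(A\bm{x}_{0},\bm{d})=\Delta\oplus\Delta=\Delta$. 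Combined with the lower bound this proves the lemma.

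The main obstacle I anticipate is the residuation step, namely justifying cleanly that $\bm{d}^{-}A\bm{x}\leq\mu$ forces $\bm{x}\leq\mu(\bm{d}^{-}A)^{-}$ and that $\bm{d}^{-}A(\bm{d}^{-}A)^{-}=\mathbb{1}$; this is where regularity of both $A$ and $\bm{d}$ is genuinely needed (so that $\bm{d}^{-}A$ is a regular row vector and its pseudo-inverse behaves like a true residual), and it must be stated carefully since in a semifield without these hypotheses the identity $\bm{u}^{-}\bm{u}=\mathbb{1}$ can fail to give the reverse inequality one wants. Everything else — the mean inequality $a\oplus b\geq\sqrt{ab}$, isotonicity, and the order-reversal of pseudo-inverses for regular vectors — is quoted directly from the preliminaries above.
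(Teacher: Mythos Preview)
Your argument is correct and matches the paper's own proof of Theorem~\ref{T-AxbcAx}, of which Lemma~\ref{L-minAxd} is the special case $\bm{b}=\bm{c}=\bm{d}$ (the lemma itself is only quoted from earlier work, not proved separately in the paper): both proceed by deriving $\bm{x}\leq\mu(\bm{d}^{-}A)^{-}$ from $\mu\geq\bm{d}^{-}A\bm{x}$, pushing this through $A$ and a pseudo-inversion to get $(A\bm{x})^{-}\bm{d}\geq\mu^{-1}\Delta^{2}$, and then verifying attainment at $\bm{x}_{0}$ by direct substitution using $(\bm{d}^{-}A)(\bm{d}^{-}A)^{-}=\mathbb{1}$. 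The only cosmetic difference is that the paper works with $r=(A\bm{x})^{-}\bm{d}\oplus\bm{d}^{-}A\bm{x}\geq\bm{d}^{-}A\bm{x}$ in place of your $\mu$ from the outset (obtaining the key inequality via $\bm{x}\bm{x}^{-}\geq I$ rather than calling it residuation), so that $r\geq r^{-1}\Delta^{2}$ falls out directly and your separate $\max\geq\sqrt{\text{product}}$ step is absorbed.
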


Fig.~\ref{F-Lb} presents examples of the mutual arrangement of the linear span of the columns in a matrix $A$ and a vector $\bm{d}$ in $\mathbb{R}_{\max,+}^{2}$. In the case when $\Delta>\mathbb{1}$, the minimum distance to the nearest vector of the linear span is attained at the vector $\bm{y}=\Delta A(\bm{d}^{-}A)^{-}$. 
\begin{figure}[ht]
\setlength{\unitlength}{1mm}
\begin{center}
\begin{picture}(35,45)

\put(0,5){\vector(1,0){35}}
\put(5,0){\vector(0,1){45}}

\put(5,5){\thicklines\vector(1,4){4}}

\put(5,5){\thicklines\vector(2,-1){5}}

\put(0,12){\line(1,1){25}}
\put(0,12){\thicklines\line(1,1){25}}
\multiput(1,13)(1,1){24}{\line(1,0){1}}

\put(7.5,0){\line(1,1){25}}
\put(7.5,0){\thicklines\line(1,1){25}}
\multiput(8,0.5)(1,1){25}{\line(-1,0){1}}

\put(5,5){\thicklines\vector(3,4){18}}

\put(13,0){$\bm{a}_{1}$}
\put(7,26){$\bm{a}_{2}$}
\put(24,30){$\bm{d}$}

\put(9,38){$\Delta=\mathbb{1}$}

\end{picture}
\hspace{20\unitlength}
\begin{picture}(40,45)

\put(0,5){\vector(1,0){40}}
\put(5,0){\vector(0,1){45}}

\put(5,5){\thicklines\vector(1,4){4}}

\put(5,5){\thicklines\vector(2,-1){5}}

\put(0,12){\line(1,1){25}}
\put(0,12){\thicklines\line(1,1){25}}
\multiput(1,13)(1,1){24}{\line(1,0){1}}

\put(7.5,0){\line(1,1){25}}
\multiput(8,0.5)(1,1){25}{\line(-1,0){1}}
\put(7.5,0){\thicklines\line(1,1){25}}

\put(27.5,20){\line(1,-1){7.5}}

\put(5,5){\thicklines\vector(3,2){22.5}}

\put(5,5){\thicklines\vector(4,1){30}}

\multiput(27.5,20)(0,-2.8){6}{\line(0,-1){2}}

\multiput(35,12.5)(0,-3.2){3}{\line(0,-1){2.2}}

\put(13,0){$\bm{a}_{1}$}
\put(7,26){$\bm{a}_{2}$}
\put(35,14){$\bm{d}$}

\put(25,23){$\bm{y}$}

\put(9,38){$\Delta>\mathbb{1}$}

\put(29.5,0){$\Delta$}

\end{picture}
\end{center}
\caption{The set $\mathop\mathrm{span}(\bm{a}_{1},\bm{a}_{2})$ and the vector $\bm{d}$ in $\mathbb{R}_{\max,+}^{2}$ when $\Delta=\mathbb{1}$ (left) and $\Delta>\mathbb{1}$ (right).}\label{F-Lb}
\end{figure}

As a consequence, we get the following result.
\begin{theorem}\label{T-EAxb}
A solution of equation \eqref{E-Axd} exists if and only if $\Delta=\mathbb{1}$. If solvable, the equation has the maximum solution given by
$$
\bm{x}
=
(\bm{d}^{-}A)^{-}.
$$
\end{theorem}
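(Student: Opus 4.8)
The plan is to derive Theorem~\ref{T-EAxb} directly from Lemma~\ref{L-minAxd}. The key observation is that equation \eqref{E-Axd} is solvable if and only if the vector $\bm{d}$ lies in $\mathop\mathrm{span}(\bm{a}_{1},\ldots,\bm{a}_{m})=\{A\bm{x}\mid\bm{x}\in\mathbb{X}^{n}\}$, which happens precisely when the minimum distance $\min_{\bm{x}}\rho(A\bm{x},\bm{d})$ equals $\mathbb{1}$ (recall that $\rho(\bm{u},\bm{v})=\mathbb{1}$ is the idempotent analogue of zero distance, since $\rho(\bm{u},\bm{u})=\bm{u}^{-}\bm{u}\oplus\bm{u}^{-}\bm{u}=\mathbb{1}$, and $\rho(\bm{u},\bm{v})\geq\mathbb{1}$ always). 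By Lemma~\ref{L-minAxd}, this minimum is exactly $\Delta$, so solvability is equivalent to $\Delta=\mathbb{1}$.

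First I would address the ``only if'' direction: if $\bm{x}$ solves $A\bm{x}=\bm{d}$, then $\rho(A\bm{x},\bm{d})=\rho(\bm{d},\bm{d})=\mathbb{1}$, hence $\Delta=\min_{\bm{x}\in\mathbb{X}_{+}^{n}}\rho(A\bm{x},\bm{d})\leq\mathbb{1}$; combined with the general fact $\Delta\geq\mathbb{1}$ this forces $\Delta=\mathbb{1}$. (A minor technical point worth checking is that the minimizing $\bm{x}$ may be taken regular, or that the distance expression still makes sense; since $A$ and $\bm{d}$ are regular this is not an issue.) Conversely, if $\Delta=\mathbb{1}$, Lemma~\ref{L-minAxd} yields a point $\bm{x}_{0}=\Delta(\bm{d}^{-}A)^{-}=(\bm{d}^{-}A)^{-}$ at which $\rho(A\bm{x}_{0},\bm{d})=\mathbb{1}$, which means $A\bm{x}_{0}=\bm{d}$, so the equation is solvable and $\bm{x}=(\bm{d}^{-}A)^{-}$ is an explicit solution.

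Next I would establish that this solution is in fact the maximum one. The natural route is to show $A\bm{x}\leq\bm{d}$ holds for every solution $\bm{x}$ (trivially, with equality), and that $A\bm{x}\leq\bm{d}$ already implies $\bm{x}\leq(\bm{d}^{-}A)^{-}$. The latter is the standard characterization of solutions of first-kind inequalities: from $A\bm{x}\leq\bm{d}$ one gets $\bm{d}^{-}A\bm{x}\leq\bm{d}^{-}\bm{d}=\mathbb{1}$, i.e. $(\bm{d}^{-}A)\bm{x}\leq\mathbb{1}$, and then componentwise manipulation using the definition of pseudo-inverse gives $\bm{x}\leq((\bm{d}^{-}A))^{-}=(\bm{d}^{-}A)^{-}$. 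Since $\bm{x}=(\bm{d}^{-}A)^{-}$ is itself a solution when $\Delta=\mathbb{1}$, it dominates all solutions and is therefore the maximum solution.

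The main obstacle, such as it is, will be the inequality manipulation $A\bm{x}\leq\bm{d}\implies\bm{x}\leq(\bm{d}^{-}A)^{-}$: one must be careful that $\bm{d}^{-}A$ is a genuine (regular) row-operator, track the zero entries through the pseudo-inverse, and justify the passage from $(\bm{d}^{-}A)\bm{x}\leq\mathbb{1}$ to the bound on $\bm{x}$ entrywise using only isotonicity and invertibility of multiplication. This is routine idempotent-algebra bookkeeping rather than a conceptual difficulty; the substantive content of the theorem is entirely carried by Lemma~\ref{L-minAxd}.
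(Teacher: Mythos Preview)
Your proposal is correct and follows exactly the route the paper intends: the paper presents Theorem~\ref{T-EAxb} simply ``as a consequence'' of Lemma~\ref{L-minAxd} without spelling out the details, and you have supplied precisely those details (solvability $\Leftrightarrow\Delta=\mathbb{1}$ via the distance interpretation, plus the standard residuation argument $A\bm{x}\leq\bm{d}\Rightarrow\bm{x}\leq(\bm{d}^{-}A)^{-}$ for maximality). Your treatment is in fact more explicit than the paper's, but the approach is identical.
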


The general solution to equation \eqref{E-Axd} with arbitrary matrix $A$ and vector $d$ is considered in \cite{Krivulin05Onsolution,Krivulin09Methods}.

\subsection{Second-Kind Equations and Inequalities}

Suppose $A\in\mathbb{X}^{n\times n}$ is a given matrix, and $\bm{x}\in\mathbb{X}^{n}$ is an unknown vector. We examine the equation
\begin{equation}
A\bm{x}
=
\bm{x},
\label{E-Axbx}
\end{equation}
and the inequality
\begin{equation}
A\bm{x}
\leq
\bm{x},
\label{I-Axbx}
\end{equation}

To solve both equation \eqref{E-Axbx} and inequality \eqref{I-Axbx} we propose an approach based on the use of a function $\mathop\mathrm{Tr}(A)$ that takes each square matrix $A$ to a scalar according to the definition
$$
\mathop\mathrm{Tr}(A)
=
\bigoplus_{m=1}^{n}\mathop\mathrm{tr} A^{m}.
$$

The function is exploited to examine whether the equation has a unique solution, many solutions, or no solution, and so may play the role of the determinant in conventional linear algebra.

The solution involves evaluating matrices $A^{\ast}$, $A^{\times}$, and $A^{+}$. The first two are given by 
$$
A^{\ast}
=
I\oplus A\oplus\cdots\oplus A^{n-1},
\qquad
A^{\times}
=
AA^{\ast}.
$$

The matrix $A^{+}$ is constructed as follows. Let $\bm{a}_{i}^{\times}$ be column $i$ in $A^{\times}$, and $a_{ii}^{\times}$ be its diagonal element. First we replace each column $\bm{a}_{i}^{\times}$ with that defined as 
$$
\bm{a}_{i}^{+}
=
\begin{cases}
\bm{a}_{i}^{\times}, & \text{if $a_{ii}^{\times}=\mathbb{1}$},
\\
\mathbb{0}, & \text{otherwise}.
\end{cases}
$$

Furthermore, the set of columns $\bm{a}_{i}^{+}$ is reduced by removing those columns, if any, that are linearly dependent on others. Finally, the rest columns are put together to form the matrix $A^{+}$.

The general solutions to both equation and inequality of the second kind in the case of irreducible matrices are given by the following results.
\begin{theorem}\label{T-IMNHEGS}
Let $A$ be an irreducible matrix, and $\bm{x}$ be the solution of equation \eqref{E-Axbx} with the matrix $A$.

Then the following statements hold:
\begin{enumerate}
\item[1)] if $\mathop\mathrm{Tr}(A)=\mathbb{1}$, then $\bm{x}=A^{+}\bm{v}$ for any vector $\bm{v}$ of appropriate size;
\item[2)] if $\mathop\mathrm{Tr}(A)\ne\mathbb{1}$, then there is only the trivial solution $\bm{x}=\bm{\mathbb{0}}$.
\end{enumerate}
\end{theorem}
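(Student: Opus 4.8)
The plan is to prove Theorem~\ref{T-IMNHEGS} by reducing the second-kind equation \eqref{E-Axbx} to a collection of first-kind problems, using the matrices $A^{\ast}$, $A^{\times}$, and $A^{+}$ introduced above, and controlling existence of nontrivial solutions through the scalar $\mathop\mathrm{Tr}(A)$.

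First I would establish the elementary equivalences that lie at the heart of the matter. If $A\bm{x}=\bm{x}$, then iterating gives $A^{m}\bm{x}=\bm{x}$ for all $m\geq 1$, hence $A^{\ast}\bm{x}=\bm{x}$ and $A^{\times}\bm{x}=A A^{\ast}\bm{x}=A\bm{x}=\bm{x}$; conversely $A^{\times}\bm{x}=\bm{x}$ already forces $A\bm{x}\leq A^{\times}\bm{x}=\bm{x}$ and $\bm{x}=A^{\times}\bm{x}\geq A\bm{x}$ componentwise via isotonicity, so in fact the solution sets of $A\bm{x}=\bm{x}$, of the inequality \eqref{I-Axbx} combined with $\bm{x}\leq A^{\ast}\bm{x}$, and of $A^{\times}\bm{x}=\bm{x}$ coincide. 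This step is mechanical; it merely exploits idempotency ($\bm{x}\oplus\bm{x}=\bm{x}$, so $A^{\ast}\bm{x}=\bm{x}\oplus A\bm{x}\oplus\cdots$ collapses once $A\bm{x}=\bm{x}$) and isotonicity of matrix–vector multiplication.

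Next I would handle the dichotomy. For irreducible $A$, one shows that the diagonal entries $a_{ii}^{\times}$ of $A^{\times}$ satisfy $a_{ii}^{\times}=\mathop\mathrm{Tr}(A)\oplus(\text{lower-order terms})$ and, more precisely, that every diagonal entry of $A^{\times}$ equals $\mathop\mathrm{Tr}(A)$ when $A$ is irreducible (irreducibility guarantees every index lies on a cycle, so each $a_{ii}^{\times}$ picks up the maximal cycle mean). \emph{Case $\mathop\mathrm{Tr}(A)\ne\mathbb{1}$:} if $\mathop\mathrm{Tr}(A)>\mathbb{1}$ one derives a contradiction from $A^{m}\bm{x}=\bm{x}$ by comparing a component that sits on the dominant cycle, forcing that component to satisfy $\lambda x_i\leq x_i$ with $\lambda>\mathbb{1}$, impossible for $x_i\in\mathbb{X}_{+}$; if $\mathop\mathrm{Tr}(A)<\mathbb{1}$ then $A^{\ast}$ already sums to a finite thing but iteration pushes components to $\mathbb{0}$, so only $\bm{x}=\bm{\mathbb{0}}$ survives. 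Either way statement~2) follows. \emph{Case $\mathop\mathrm{Tr}(A)=\mathbb{1}$:} here every $a_{ii}^{\times}=\mathbb{1}$, so by construction $A^{+}=A^{\times}$ (up to deletion of linearly dependent columns, which does not change the span), and one checks directly that $A A^{\times}=A^{\times}$ — indeed $A A^{\times}=A A A^{\ast}=A(A^{\ast}\ominus I)$-type telescoping, more carefully $A A^{\ast}=A\oplus\cdots\oplus A^{n}$ and by the Cayley–Hamilton-type identity valid when $\mathop\mathrm{Tr}(A)=\mathbb{1}$ one has $A^{n}\leq A^{\ast}$, hence $A A^{\times}\leq A^{\times}$ and the reverse inequality is immediate, giving $A^{\times}$ (hence every column of $A^{+}$) a solution. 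Therefore $\bm{x}=A^{+}\bm{v}$ solves \eqref{E-Axbx} for every $\bm{v}$; conversely any solution $\bm{x}$ equals $A^{\times}\bm{x}$ by the first step, i.e.\ $\bm{x}$ is a linear combination of the columns of $A^{\times}$, and discarding the linearly dependent columns shows $\bm{x}\in\mathop\mathrm{span}(A^{+})$, i.e.\ $\bm{x}=A^{+}\bm{v}$ for a suitable $\bm{v}$. This yields statement~1).

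The main obstacle I anticipate is the precise behavior of the diagonal entries of $A^{\times}$ and the accompanying Cayley–Hamilton-type identity $A^{n}\leq A\oplus\cdots\oplus A^{n-1}$ that must hold exactly when $\mathop\mathrm{Tr}(A)=\mathbb{1}$: making this rigorous requires a careful walk/cycle decomposition argument (every directed walk of length $\geq n$ in the weighted digraph of $A$ contains a cycle, whose removal does not decrease the weight because its weight is $\leq\mathbb{1}$) together with the irreducibility hypothesis to guarantee $a_{ii}^{\times}$ reaches $\mathbb{1}$ for all $i$ simultaneously. The rest — the equivalence of solution sets and the verification that $A^{+}\bm{v}$ works — is routine manipulation with idempotent identities. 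Wherever possible I would cite the corresponding lemmas of \cite{Krivulin06Solution,Krivulin09Methods} rather than reprove the combinatorial core.
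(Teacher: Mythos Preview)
The paper does not actually prove Theorem~\ref{T-IMNHEGS}. It is listed in the ``Preliminary Results'' section and imported without proof from \cite{Krivulin05Onsolution,Krivulin06Solution,Krivulin09Methods}; the only proofs the paper supplies are for Theorems~\ref{T-AxbcAx}, \ref{T-xbcx}, \ref{T-LPEC}, \ref{T-LPIC} and Corollary~\ref{C-AxbcAx}. So there is nothing in the paper to compare your argument against, and indeed the paper's intent is exactly your closing remark: cite \cite{Krivulin06Solution,Krivulin09Methods} rather than reprove the result.

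That said, your sketch contains a genuine error that would have to be repaired if you insisted on giving a self-contained proof. You assert that for irreducible $A$ every diagonal entry of $A^{\times}$ equals $\mathop\mathrm{Tr}(A)$, and hence that when $\mathop\mathrm{Tr}(A)=\mathbb{1}$ one has $A^{+}=A^{\times}$ up to removal of dependent columns. This is false: $a_{ii}^{\times}$ records only closed walks through the particular vertex $i$, whereas $\mathop\mathrm{Tr}(A)$ is the maximum over all cycles, so one only has $a_{ii}^{\times}\leq\mathop\mathrm{Tr}(A)$. For instance, in $\mathbb{R}_{\max,+}$ with
\[
A=\begin{pmatrix}-\infty&0&-\infty\\0&-\infty&-5\\-\infty&-5&-\infty\end{pmatrix},
\]
the matrix is irreducible with $\mathop\mathrm{Tr}(A)=0=\mathbb{1}$, yet $a_{33}^{\times}=-10\ne\mathbb{1}$. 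The construction of $A^{+}$ is designed precisely to discard such columns before the linear-dependence pruning, not after. The correct route is to show (i) the columns $\bm{a}_{i}^{\times}$ with $a_{ii}^{\times}=\mathbb{1}$ are eigenvectors, (ii) every solution $\bm{x}=A^{\times}\bm{x}$ lies in their span because the remaining columns of $A^{\times}$ are already linear combinations of these (this is where irreducibility is really used: any vertex can be reached from a critical vertex). Your overall plan --- pass to $A^{\times}$, use the pigeonhole cycle-removal bound $A^{n}\leq A^{\times}$ when $\mathop\mathrm{Tr}(A)\leq\mathbb{1}$, and read off the dichotomy --- is the standard one, but the bookkeeping around which columns survive into $A^{+}$ needs to follow the definition in the paper rather than the shortcut you propose.
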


Fig.~\ref{F-GSEAxx} gives examples of solutions to equations \eqref{E-Axbx} in $\mathbb{R}_{\max,+}^{2}$ for some particular matrices $A=(\bm{a}_{1},\bm{a}_{2})$. In the left example, the solution set is depicted by a thick line drawn through the end point of the vector $\bm{a}_{2}$. The solution on the right takes the form of a strip between thick lines going through $\bm{a}_{1}$ and $\bm{a}_{2}$. 
\begin{figure}[ht]
\setlength{\unitlength}{1mm}
\begin{center}
\begin{picture}(33,50)

\put(0,23){\vector(1,0){33}}
\put(14,0){\vector(0,1){50}}

\put(14,23){\thicklines\vector(-1,0){10}}
\put(1,20){\thicklines\line(1,1){25}}

\put(14,23){\thicklines\vector(-1,-4){5}}
\put(7,1){\line(1,1){25}}

\put(14,23){\thicklines\vector(1,2){10}}

\multiput(24,43)(0,-3.1){7}{\line(0,-1){2}}

\multiput(24,43)(-3,0){4}{\line(-1,0){2}}

\put(0,26){$\bm{a}_{2}$}
\put(3,4){$\bm{a}_{1}$}

\put(27,41){$\bm{x}$}

\put(20,19){$x_{1}$}

\put(8,43){$x_{2}$}

\put(10,19){$0$}

\end{picture}
\hspace{20\unitlength}
\begin{picture}(33,50)

\put(0,23){\vector(1,0){33}}
\put(16,0){\vector(0,1){50}}

\put(16,23){\thicklines\vector(-1,0){12}}
\put(1,20){\thicklines\line(1,1){25}}
\multiput(2,21)(1,1){24}{\line(1,0){1}}

\put(16,23){\thicklines\vector(0,-1){8}}
\put(7,6){\thicklines\line(1,1){24}}
\multiput(8,7)(1,1){23}{\line(-1,0){1}}

\put(16,23){\thicklines\vector(2,3){12}}

\multiput(28,41)(0,-2.8){7}{\line(0,-1){2}}

\multiput(28,41)(-2.8,0){5}{\line(-1,0){2}}

\put(0,26){$\bm{a}_{2}$}
\put(18,12){$\bm{a}_{1}$}

\put(29,42){$\bm{x}$}

\put(27,19){$x_{1}$}

\put(10,41){$x_{2}$}

\put(12,19){$0$}

\end{picture}
\end{center}
\caption{Solution sets for second-kind equations in $\mathbb{R}_{\max,+}^{2}$.}\label{F-GSEAxx}
\end{figure}
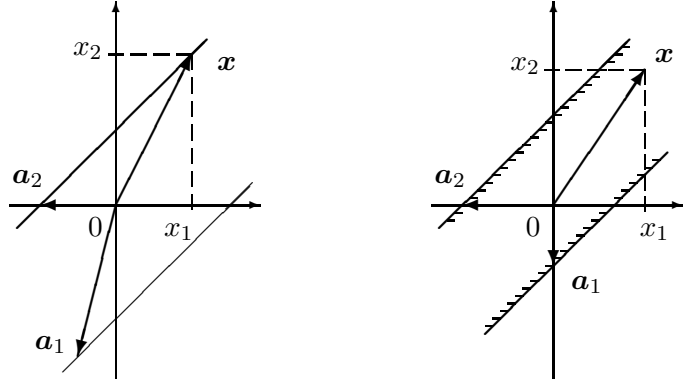

\begin{lemma}
Let $A$ be an irreducible matrix, and $\bm{x}$ be the solution of inequality \eqref{I-Axbx} with the matrix $A$.

Then the following statements hold:
\begin{enumerate}
\item[1)] if $\mathop\mathrm{Tr}(A)\leq\mathbb{1} $, then $\bm{x}=A^{\ast}\bm{v}$ for any vector $\bm{v}$ of appropriate size;
\item[2)] if $\mathop\mathrm{Tr}(A)>\mathbb{1}$, then there is only the trivial solution $\bm{x}=\bm{\mathbb{0}}$.
\end{enumerate}
\end{lemma}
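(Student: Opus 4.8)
The plan is to prove both parts from a single observation: for \emph{any} matrix $A$, the inequality \eqref{I-Axbx} is equivalent to the second-kind equation $A^{\ast}\bm{x}=\bm{x}$. Indeed, if $A\bm{x}\leq\bm{x}$ then $A^{m}\bm{x}\leq\bm{x}$ for all $m\geq0$ by isotonicity of matrix multiplication and induction, so $A^{\ast}\bm{x}=\bigoplus_{m=0}^{n-1}A^{m}\bm{x}\leq\bm{x}$; conversely $A^{\ast}\geq I$ gives $A^{\ast}\bm{x}\geq\bm{x}$, whence equality. In the opposite direction, $A\leq A^{\ast}$ forces $A\bm{x}\leq A^{\ast}\bm{x}=\bm{x}$ whenever $A^{\ast}\bm{x}=\bm{x}$. (An equivalent route is to note that $A\bm{x}\leq\bm{x}$ iff $(I\oplus A)\bm{x}=\bm{x}$ and to apply Theorem~\ref{T-IMNHEGS} to the irreducible matrix $I\oplus A$; I will comment on this below.)

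For statement~1), assume $\mathop\mathrm{Tr}(A)\leq\mathbb{1}$. It then suffices to show that $A^{\ast}\bm{x}=\bm{x}$ holds exactly for the vectors of the form $\bm{x}=A^{\ast}\bm{v}$. If $A^{\ast}\bm{x}=\bm{x}$, take $\bm{v}=\bm{x}$. If $\bm{x}=A^{\ast}\bm{v}$, then $A^{\ast}\bm{x}=A^{\ast}A^{\ast}\bm{v}$, and here one uses the idempotency of the Kleene star, $A^{\ast}A^{\ast}=A^{\ast}$, which is valid precisely because $\mathop\mathrm{Tr}(A)\leq\mathbb{1}$ makes the powers of $A$ stop growing (so $A^{m}\leq A^{\ast}$ for every $m$); this yields $A^{\ast}\bm{x}=A^{\ast}\bm{v}=\bm{x}$. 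Combining with the first paragraph gives $A\bm{x}\leq\bm{x}\iff\bm{x}=A^{\ast}\bm{v}$, as claimed.

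For statement~2), assume $\mathop\mathrm{Tr}(A)>\mathbb{1}$ and let $\bm{x}\neq\bm{\mathbb{0}}$ satisfy \eqref{I-Axbx}; I derive a contradiction. First, $\bm{x}$ must be regular: if $x_{i}=\mathbb{0}$ while $x_{j}\neq\mathbb{0}$ for some $i,j$, then by irreducibility of $A$ there is an $m$ with $(A^{m})_{ij}\neq\mathbb{0}$, so $(A^{m}\bm{x})_{i}\geq(A^{m})_{ij}x_{j}\neq\mathbb{0}$, contradicting $A^{m}\bm{x}\leq\bm{x}$. Now, with $\bm{x}$ regular, multiply $A^{m}\bm{x}\leq\bm{x}$ on the left by $\bm{x}^{-}$ to get $\bm{x}^{-}A^{m}\bm{x}\leq\bm{x}^{-}\bm{x}=\mathbb{1}$; since $\bm{x}^{-}A^{m}\bm{x}\geq\bigoplus_{i}x_{i}^{-1}(A^{m})_{ii}x_{i}=\mathop\mathrm{tr}A^{m}$, we obtain $\mathop\mathrm{tr}A^{m}\leq\mathbb{1}$ for all $m$, hence $\mathop\mathrm{Tr}(A)\leq\mathbb{1}$ --- a contradiction. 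Therefore the only solution is $\bm{x}=\bm{\mathbb{0}}$.

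The elementary steps above are the isotone induction and the inequalities $A\leq A^{\ast}$, $I\leq A^{\ast}$; the two genuinely load-bearing facts are (i) the stabilization of matrix powers under $\mathop\mathrm{Tr}(A)\leq\mathbb{1}$, equivalently $A^{\ast}A^{\ast}=A^{\ast}$, used for statement~1), and (ii) that irreducibility promotes any nonzero solution of \eqref{I-Axbx} to a regular one, used for statement~2). I expect (i) to be the main obstacle to a fully self-contained write-up; it is, however, the same lemma that underlies Theorem~\ref{T-IMNHEGS}, so an alternative is to skip the direct argument: rewrite \eqref{I-Axbx} as $(I\oplus A)\bm{x}=\bm{x}$, check that $I\oplus A$ is irreducible and that $\mathop\mathrm{Tr}(I\oplus A)=\mathbb{1}\oplus\mathop\mathrm{Tr}(A)$ (so the two trace regimes translate), apply Theorem~\ref{T-IMNHEGS}, and finally identify $(I\oplus A)^{+}$ with a column-reduced form of $A^{\ast}$ --- the latter identification again being where the work concentrates.
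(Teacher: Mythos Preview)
The paper does not supply a proof of this lemma: it is listed among the preliminary results imported from \cite{Krivulin05Onsolution,Krivulin06Solution,Krivulin09Methods}, so there is no in-paper argument to compare against.

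Your argument is sound. The reduction of $A\bm{x}\leq\bm{x}$ to $A^{\ast}\bm{x}=\bm{x}$ is clean and holds without any hypothesis on $A$; the parametrization $\bm{x}=A^{\ast}\bm{v}$ then follows once $A^{\ast}A^{\ast}=A^{\ast}$ is in place. You rightly flag this idempotency as the load-bearing step and give the correct reason: when $\mathop\mathrm{Tr}(A)\leq\mathbb{1}$, every closed walk has weight $\leq\mathbb{1}$, so any walk of length $\geq n$ can have a cycle excised without decreasing its weight, yielding $A^{m}\leq A^{\ast}$ for all $m$ and hence $A^{\ast}A^{\ast}=\bigoplus_{k=0}^{2n-2}A^{k}=A^{\ast}$. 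This is exactly the standard Kleene-star stabilization argument used in the cited sources, so in spirit your route matches theirs. Your treatment of statement~2) via $\bm{x}^{-}A^{m}\bm{x}\geq\mathop\mathrm{tr}A^{m}$ is a direct and elegant way to force $\mathop\mathrm{Tr}(A)\leq\mathbb{1}$ from the existence of a regular solution.

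The alternative you sketch --- rewrite as $(I\oplus A)\bm{x}=\bm{x}$ and invoke Theorem~\ref{T-IMNHEGS} --- also goes through: $(I\oplus A)$ inherits irreducibility from $A$, $\mathop\mathrm{Tr}(I\oplus A)=\mathbb{1}\oplus\mathop\mathrm{Tr}(A)$, and when $\mathop\mathrm{Tr}(A)\leq\mathbb{1}$ one computes $(I\oplus A)^{\times}=A^{\ast}$ with all diagonal entries equal to $\mathbb{1}$, so $(I\oplus A)^{+}$ is precisely a column-reduced $A^{\ast}$ whose linear span coincides with that of $A^{\ast}$. Either path is acceptable; the direct one is shorter.
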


Fig.~\ref{F-GSIAxx} shows the solution of the inequality with the same matrix as in the left example in Fig.~\ref{F-GSEAxx}. The solution set forms a region that is bounded by the slanted thick lines. Note that in the case of the matrix in the right example of Fig.~\ref{F-GSEAxx}, the solution sets of both equation and inequality coincide. 
\begin{figure}[ht]
\setlength{\unitlength}{1mm}
\begin{center}
\begin{picture}(40,50)

\put(0,23){\vector(1,0){40}}
\put(14,0){\vector(0,1){50}}

\put(14,23){\thicklines\vector(-1,0){10}}
\put(1,20){\thicklines\line(1,1){25}}
\multiput(2,21)(1,1){24}{\line(1,0){1}}

\put(14,23){\thicklines\vector(-1,-4){5}}
\put(7,1){\line(1,1){25}}

\put(10,0){\thicklines\line(1,1){25}}
\multiput(10,0)(1,1){26}{\line(-1,0){1}}

\put(14,23){\thicklines\vector(2,3){10}}

\multiput(24,38)(0,-2.8){6}{\line(0,-1){2}}

\multiput(24,38)(-3,0){4}{\line(-1,0){2}}

\put(0,26){$\bm{a}_{2}$}
\put(3,4){$\bm{a}_{1}$}

\put(25,39){$\bm{x}$}

\put(20,19){$x_{1}$}

\put(8,38){$x_{2}$}

\put(10,19){$0$}

\end{picture}
\end{center}
\caption{Solution set for a second-kind inequality in $\mathbb{R}_{\max,+}^{2}$.}\label{F-GSIAxx}
\end{figure}
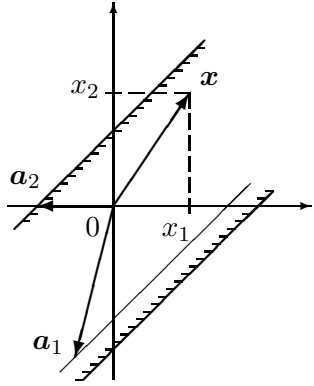

Related results for the case of arbitrary matrices can be found in \cite{Krivulin06Solution,Krivulin09Methods}.

\section{Optimization Problems}

Given a matrix $A\in\mathbb{X}^{m\times n}$ and vectors $\bm{b},\bm{c}\in\mathbb{X}^{m}$, consider a problem that is to find
\begin{equation}
\min_{\bm{x}\in\mathbb{X}_{+}^{n}}((A\bm{x})^{-}\bm{b}\oplus\bm{c}^{-}A\bm{x}).
\label{P-minphix}
\end{equation}

Note that a particular case of the problem when $\bm{c}=\bm{b}$ arises in the previous section in the context of the solution of first-kind equations.

The next result offers a solution to problem \eqref{P-minphix}.
\begin{theorem}
\label{T-AxbcAx}
Suppose that $A$ is a regular matrix, $\bm{b}$ and $\bm{c}$ are regular vectors, and assume that
$$
\Delta
=
\sqrt{(A(\bm{c}^{-}A)^{-})^{-}\bm{b}}.
$$

Then it holds that
\begin{equation}
\min_{\bm{x}\in\mathbb{X}_{+}^{n}}((A\bm{x})^{-}\bm{b}\oplus\bm{c}^{-}A\bm{x})
=
\Delta,
\label{E-AxbcAx}
\end{equation}
where the minimum is attained at the vector
$$
\bm{x}
=
\Delta(\bm{c}^{-}A)^{-}.
$$
\end{theorem}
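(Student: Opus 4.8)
The plan is to prove the two inequalities $\varphi(\bm{x})\geq\Delta$ for every $\bm{x}\in\mathbb{X}_{+}^{n}$ and $\varphi(\bm{x}_{0})\leq\Delta$ for the particular vector $\bm{x}_{0}=\Delta(\bm{c}^{-}A)^{-}$, where $\varphi(\bm{x})=(A\bm{x})^{-}\bm{b}\oplus\bm{c}^{-}A\bm{x}$; together they yield \eqref{E-AxbcAx} and the attainment claim. Structurally this is Lemma~\ref{L-minAxd} with $\bm{b}$ and $\bm{c}$ allowed to differ, so the argument runs along the same lines, the only extra care being to keep the two vectors in their respective roles. Throughout I would use that, since $A$ is regular, $\bm{c}^{-}A$ is a nonzero row vector and, for regular $\bm{x}$, both $A\bm{x}$ and $A(\bm{c}^{-}A)^{-}$ are regular (each row of $A$ meets a nonzero column), so all pseudo-inverses below are legitimate and obey $(\gamma\bm{v})^{-}=\gamma^{-1}\bm{v}^{-}$ for nonzero scalar $\gamma$, $\bm{u}\leq\bm{w}\Rightarrow\bm{u}^{-}\geq\bm{w}^{-}$ for regular $\bm{u},\bm{w}$, and $\bm{u}\bm{u}^{-}=\mathbb{1}$ for nonzero $\bm{u}$; in particular $\Delta$ is a well-defined nonzero scalar and $\bm{c}^{-}A\bm{x}\neq\mathbb{0}$.

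For the lower bound, I would start from the scalar inequality $u\oplus v\geq\sqrt{uv}$, which holds in a radicable idempotent semifield because $(u\oplus v)^{2}\geq uv$; applied with $u=(A\bm{x})^{-}\bm{b}$ and $v=\bm{c}^{-}A\bm{x}$ it gives $\varphi(\bm{x})\geq\sqrt{((A\bm{x})^{-}\bm{b})(\bm{c}^{-}A\bm{x})}$. The crux is then the scalar bound $(\bm{c}^{-}A\bm{x})((A\bm{x})^{-}\bm{b})\geq\Delta^{2}=(A(\bm{c}^{-}A)^{-})^{-}\bm{b}$. I would derive it from the entrywise-obvious inequality $(\bm{c}^{-}A\bm{x})(\bm{c}^{-}A)^{-}\geq\bm{x}$, whose $j$-th component reads $(\bm{c}^{-}A)_{j}^{-1}\bigl(\bigoplus_{k}(\bm{c}^{-}A)_{k}x_{k}\bigr)\geq(\bm{c}^{-}A)_{j}^{-1}(\bm{c}^{-}A)_{j}x_{j}=x_{j}$. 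Multiplying this on the left by $A$ gives $(\bm{c}^{-}A\bm{x})\,A(\bm{c}^{-}A)^{-}\geq A\bm{x}$; taking pseudo-inverses reverses it to $(\bm{c}^{-}A\bm{x})^{-1}(A(\bm{c}^{-}A)^{-})^{-}\leq(A\bm{x})^{-}$; multiplying by the nonzero scalar $\bm{c}^{-}A\bm{x}$ and then by $\bm{b}$ on the right yields $(A(\bm{c}^{-}A)^{-})^{-}\bm{b}\leq(\bm{c}^{-}A\bm{x})((A\bm{x})^{-}\bm{b})$, as required. Hence $\varphi(\bm{x})\geq\sqrt{\Delta^{2}}=\Delta$.

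For attainment, I would substitute $\bm{x}_{0}=\Delta(\bm{c}^{-}A)^{-}$ directly. Then $A\bm{x}_{0}=\Delta\,A(\bm{c}^{-}A)^{-}$, so $(A\bm{x}_{0})^{-}=\Delta^{-1}(A(\bm{c}^{-}A)^{-})^{-}$ and therefore $(A\bm{x}_{0})^{-}\bm{b}=\Delta^{-1}(A(\bm{c}^{-}A)^{-})^{-}\bm{b}=\Delta^{-1}\Delta^{2}=\Delta$, while $\bm{c}^{-}A\bm{x}_{0}=\Delta\,\bm{c}^{-}A(\bm{c}^{-}A)^{-}=\Delta$ since $\bm{c}^{-}A(\bm{c}^{-}A)^{-}=\mathbb{1}$. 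Thus $\varphi(\bm{x}_{0})=\Delta\oplus\Delta=\Delta$, which gives $\min\leq\Delta$ and exhibits the minimizer.

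The pseudo-inverse identities and the isotonicity manipulations are routine; the one genuinely load-bearing idea is to split $u\oplus v\geq\sqrt{uv}$ in precisely the way that lets the factor $\bm{c}^{-}A\bm{x}$ ``cancel'' against $(A\bm{x})^{-}$, reducing everything to the trivial entrywise inequality $(\bm{c}^{-}A\bm{x})(\bm{c}^{-}A)^{-}\geq\bm{x}$. I would also double-check the degenerate hypotheses — chiefly that $\bm{c}^{-}A\neq\mathbb{0}$, which is exactly where regularity of $A$ and $\bm{c}$ enters — so that $(\bm{c}^{-}A)^{-}$ and the value $\Delta$ are meaningful.
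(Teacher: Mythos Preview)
Your proof is correct and follows essentially the same route as the paper. The attainment computation is identical, and your lower-bound chain---from $(\bm{c}^{-}A\bm{x})(\bm{c}^{-}A)^{-}\geq\bm{x}$ through left multiplication by $A$, pseudo-inversion, and pairing with $\bm{b}$---matches the paper's argument step for step; the only cosmetic difference is that you invoke $u\oplus v\geq\sqrt{uv}$ explicitly, whereas the paper phrases the same conclusion as $r\geq r^{-1}\Delta^{2}$ with $r=\varphi(\bm{x})$.
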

\begin{proof}
We first verify that $\Delta$ is a lower bound for the objective function in \eqref{P-minphix}, and then show that the function reaches the bound when $\bm{x}=\Delta(\bm{c}^{-}A)^{-}$.
  
Take any vector $\bm{x}\in\mathbb{X}_{+}^{n}$ and consider
$$
r
=
(A\bm{x})^{-}\bm{b}\oplus\bm{c}^{-}A\bm{x}.
$$

From the last equality we have two inequalities
$$
r
\geq
\bm{c}^{-}A\bm{x},
\qquad
r
\geq
(A\bm{x})^{-}\bm{b}.
$$

Right multiplication of the first inequality by $\bm{x}^{-}$ together with the obvious inequality $\bm{x}\bm{x}^{-}\geq I$ give
$$
r\bm{x}^{-}
\geq
\bm{c}^{-}A\bm{x}\bm{x}^{-}
\geq
\bm{c}^{-}A.
$$

Furthermore, by pseudo-inverting both sides, we get the inequality $\bm{x}\leq r(\bm{c}^{-}A)^{-}$. Left multiplication by $A$ followed by another application of pseudo-inversion leads to
$$
(A\bm{x})^{-}
\geq
r^{-1}(A(\bm{c}^{-}A)^{-})^{-}.
$$

Substitution into the second inequality results in
$$
r
\geq
r^{-1}(A(\bm{c}^{-}A)^{-})^{-}\bm{b}
=
r^{-1}\Delta^{2}
$$
and consequently, in the inequality $r\geq\Delta$.

It remains to verify that we have $r=\Delta$ when putting $\bm{x}=\Delta(\bm{c}^{-}A)^{-}$. Indeed, in this case we have
$$
r
=
(A\bm{x})^{-}\bm{b}\oplus\bm{c}^{-}A\bm{x}
=
\Delta^{-1}(A(\bm{c}^{-}A)^{-})^{-}\bm{b}\oplus\Delta\bm{c}^{-}A(\bm{c}^{-}A)^{-}
=
\Delta
\oplus
\Delta
=
\Delta,
$$
that concludes the proof.
\end{proof}

Now we present a useful property of the solution and consider particular cases when an extended solution set appears to exist. 
\begin{corollary}
\label{C-AxbcAx}
Under the assumptions of Theorem~\ref{T-AxbcAx} the following statements hold:
\begin{enumerate}
\item[(i)] any vector $\bm{x}$ that gives the minimum in problem \eqref{P-minphix} satisfies the inequality
\begin{equation}
\label{I-DbAxDc}
\Delta^{-1}\bm{b}
\leq
A\bm{x}
\leq
\Delta\bm{c};
\end{equation}
\item[(ii)] 
if $\bm{u}$ is a solution of the equation $A\bm{u}=\bm{b}$, then the minimum in \eqref{P-minphix} is attained at $\bm{x}=\Delta^{-1}\bm{u}$;
\item[(iii)]
if $\bm{v}$ is a solution of the equation $A\bm{v}=\bm{c}$, then the minimum in \eqref{P-minphix} is attained at $\bm{x}=\Delta\bm{v}$.
\end{enumerate}
\end{corollary}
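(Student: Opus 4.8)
The plan is to lean on Theorem~\ref{T-AxbcAx}, which already pins the optimal value of \eqref{P-minphix} to $\Delta$. Writing $r(\bm{x})=(A\bm{x})^{-}\bm{b}\oplus\bm{c}^{-}A\bm{x}$, a regular vector $\bm{x}$ is a minimizer exactly when $r(\bm{x})=\Delta$; since $r(\bm{x})\geq(A\bm{x})^{-}\bm{b}$ and $r(\bm{x})\geq\bm{c}^{-}A\bm{x}$ always hold, while $r(\bm{x})\geq\Delta$ for every admissible $\bm{x}$ by Theorem~\ref{T-AxbcAx}, the condition $r(\bm{x})=\Delta$ is equivalent to the pair of scalar inequalities $(A\bm{x})^{-}\bm{b}\leq\Delta$ and $\bm{c}^{-}A\bm{x}\leq\Delta$. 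This reformulation is the engine behind all three statements.

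For part~(i) I would use the elementary equivalence $\bm{y}^{-}\bm{z}\leq\alpha\iff\bm{z}\leq\alpha\bm{y}$, valid for a regular vector $\bm{y}$ and a scalar $\alpha\in\mathbb{X}_{+}$, which is immediate from the definition of the pseudo-inverse together with the fact that a maximum of scalars lies below $\alpha$ iff each term does. Note that $A\bm{x}$ is regular, being the product of a regular matrix and a regular vector. Applying the equivalence to $\bm{c}^{-}A\bm{x}\leq\Delta$ yields $A\bm{x}\leq\Delta\bm{c}$, and applying it to $(A\bm{x})^{-}\bm{b}\leq\Delta$ yields $\bm{b}\leq\Delta A\bm{x}$, that is, $\Delta^{-1}\bm{b}\leq A\bm{x}$. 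Together these give \eqref{I-DbAxDc}.

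For parts~(ii) and~(iii) I would substitute the candidate vectors and evaluate $r$ directly. If $A\bm{u}=\bm{b}$ and $\bm{x}=\Delta^{-1}\bm{u}$, then $A\bm{x}=\Delta^{-1}\bm{b}$, so $(A\bm{x})^{-}\bm{b}=\Delta\bm{b}^{-}\bm{b}=\Delta$ and $\bm{c}^{-}A\bm{x}=\Delta^{-1}\bm{c}^{-}\bm{b}$, whence $r(\bm{x})=\Delta\oplus\Delta^{-1}\bm{c}^{-}\bm{b}$. Likewise, if $A\bm{v}=\bm{c}$ and $\bm{x}=\Delta\bm{v}$, then $A\bm{x}=\Delta\bm{c}$, so $\bm{c}^{-}A\bm{x}=\Delta$ and $(A\bm{x})^{-}\bm{b}=\Delta^{-1}\bm{c}^{-}\bm{b}$, again giving $r(\bm{x})=\Delta\oplus\Delta^{-1}\bm{c}^{-}\bm{b}$. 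In both cases it remains only to check $\Delta^{-1}\bm{c}^{-}\bm{b}\leq\Delta$, i.e. $\bm{c}^{-}\bm{b}\leq\Delta^{2}$. This follows from the ``subsolution'' inequality $A(\bm{c}^{-}A)^{-}\leq\bm{c}$ (which also falls out of part~(i) applied to the minimizer $\bm{x}_{0}=\Delta(\bm{c}^{-}A)^{-}$ of Theorem~\ref{T-AxbcAx}): pseudo-inverting reverses the order, so $(A(\bm{c}^{-}A)^{-})^{-}\geq\bm{c}^{-}$, and right multiplication by $\bm{b}$ yields $\Delta^{2}=(A(\bm{c}^{-}A)^{-})^{-}\bm{b}\geq\bm{c}^{-}\bm{b}$. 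Hence $r(\bm{x})=\Delta$ for both candidates, which are admissible as soon as the equations $A\bm{u}=\bm{b}$ and $A\bm{v}=\bm{c}$ have regular solutions, for instance the maximum solutions furnished by Theorem~\ref{T-EAxb}.

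I expect the only genuine obstacle to be establishing $\bm{c}^{-}\bm{b}\leq\Delta^{2}$ (equivalently $A(\bm{c}^{-}A)^{-}\leq\bm{c}$) needed in parts~(ii) and~(iii); everything else is routine bookkeeping with pseudo-inverses, the induced order, and isotonicity of the operations. A secondary, purely technical point is ensuring the proposed minimizers lie in $\mathbb{X}_{+}^{n}$ so that they are feasible for \eqref{P-minphix}.
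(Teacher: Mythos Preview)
Your proposal is correct and follows essentially the same route as the paper: both proofs extract the two scalar inequalities $(A\bm{x})^{-}\bm{b}\leq\Delta$ and $\bm{c}^{-}A\bm{x}\leq\Delta$ from optimality and convert them to \eqref{I-DbAxDc}, and both handle (ii) and (iii) by direct substitution together with the key estimate $\bm{c}^{-}\bm{b}\leq\Delta^{2}$ obtained from $A(\bm{c}^{-}A)^{-}\leq\bm{c}$. The only cosmetic difference is that the paper derives the componentwise bounds in (i) via the tricks $(A\bm{x})^{-}\leq(A\bm{x})^{-}\bm{b}\bm{b}^{-}$ and $A\bm{x}\leq\bm{c}\bm{c}^{-}A\bm{x}$ rather than invoking your stated equivalence, and it proves $A(\bm{c}^{-}A)^{-}\leq\bm{c}$ directly from $\bm{c}^{-}A(\bm{c}^{-}A)^{-}=\mathbb{1}$ rather than by appealing to part~(i).
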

\begin{proof}
To verify statement (i) suppose $\bm{x}$ is a vector that solves problem \eqref{P-minphix}. From the equality
$$
(A\bm{x})^{-}\bm{b}\oplus\bm{c}^{-}A\bm{x}
=
\Delta
$$ 
we get two inequalities
$$
(A\bm{x})^{-}\bm{b}
\leq
\Delta,
\qquad
\bm{c}^{-}A\bm{x}
\leq
\Delta,
$$
and note that both vectors $\bm{b}$ and $\bm{c}$ are regular.

With the same technique as above, from the first inequality we have
$$
(A\bm{x})^{-}\leq(A\bm{x})^{-}\bm{b}\bm{b}^{-}\leq\Delta\bm{b}^{-},
$$
and then arrive at the inequality  $A\bm{x}\geq\Delta^{-1}\bm{b}$, which is the left part of \eqref{I-DbAxDc}. To get the right part, we take the second inequality and write
$$
A\bm{x}\leq\bm{c}\bm{c}^{-}A\bm{x}\leq\Delta\bm{c}.
$$

To prove statements (ii) and (iii), we first show that $\Delta^{2}\geq\bm{c}^{-}\bm{b}$. We have
$$
A(\bm{c}^{-}A)^{-}\leq\bm{c}\bm{c}^{-}A(\bm{c}^{-}A)^{-}=\bm{c}
$$
and thus
$$
\Delta^{2}
=
(A(\bm{c}^{-}A)^{-})^{-}\bm{b}
\geq
\bm{c}^{-}\bm{b}.
$$

Now suppose $A\bm{u}=\bm{b}$ and take $\bm{x}=\Delta^{-1}\bm{u}$. With the above inequality, we get
$$
(A\bm{x})^{-}\bm{b}\oplus\bm{c}^{-}A\bm{x}
=
\Delta\bm{b}^{-}\bm{b}\oplus\Delta^{-1}\bm{c}^{-}\bm{b}
=
\Delta.
$$

In the same way we assume that $A\bm{v}=\bm{c}$ and put $\bm{x}=\Delta\bm{v}$. Substitution of $\bm{x}$ gives
$$
(A\bm{x})^{-}\bm{b}\oplus\bm{c}^{-}A\bm{x}
=
\Delta^{-1}\bm{c}^{-}\bm{b}\oplus\Delta\bm{c}^{-}\bm{c}
=
\Delta.
\qedhere
$$
\end{proof}

Now we give the solution to problem~\eqref{P-minphix} in a particular case when $A=I$.

\begin{theorem}
\label{T-xbcx}
Suppose that $\bm{b}$ and $\bm{c}$ are regular vectors, and assume that
$$
\Delta=(\bm{c}^{-}\bm{b})^{1/2}.
$$

Then it holds that
$$
\min_{\bm{x}\in\mathbb{X}_{+}^{n}}(\bm{x}^{-}\bm{b}\oplus\bm{c}^{-}\bm{x})
=
\Delta
$$
with the minimum attained at any vector $\bm{x}$ such that
$$
\Delta^{-1}\bm{b}
\leq
\bm{x}
\leq
\Delta\bm{c}.
$$
\end{theorem}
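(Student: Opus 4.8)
The plan is to derive this as the specialization $A = I$ of Theorem~\ref{T-AxbcAx}, so most of the work is just simplifying the formulas. When $A = I$, the expression $(\bm{c}^{-}A)^{-}$ becomes $(\bm{c}^{-})^{-} = \bm{c}$ for a regular vector $\bm{c}$, using that pseudo-inversion is involutive on regular vectors. Then $A(\bm{c}^{-}A)^{-} = \bm{c}$, so $(A(\bm{c}^{-}A)^{-})^{-}\bm{b} = \bm{c}^{-}\bm{b}$, and hence $\Delta = \sqrt{\bm{c}^{-}\bm{b}} = (\bm{c}^{-}\bm{b})^{1/2}$, matching the statement. Theorem~\ref{T-AxbcAx} then immediately gives $\min_{\bm{x}\in\mathbb{X}_{+}^{n}}(\bm{x}^{-}\bm{b}\oplus\bm{c}^{-}\bm{x}) = \Delta$, attained at $\bm{x} = \Delta(\bm{c}^{-}A)^{-} = \Delta\bm{c}$, which indeed satisfies the displayed two-sided bound $\Delta^{-1}\bm{b} \leq \bm{x} \leq \Delta\bm{c}$ on its right end.

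The remaining content is the characterization of the \emph{full} solution set as exactly the order interval $\{\bm{x} : \Delta^{-1}\bm{b} \leq \bm{x} \leq \Delta\bm{c}\}$. For the "only if" direction, I would invoke Corollary~\ref{C-AxbcAx}(i) with $A = I$: any minimizer $\bm{x}$ satisfies $\Delta^{-1}\bm{b} \leq \bm{x} \leq \Delta\bm{c}$. For the "if" direction, I would take an arbitrary $\bm{x}$ in this interval and compute the objective directly: from $\bm{x} \leq \Delta\bm{c}$, isotonicity of pseudo-inversion on regular vectors gives $\bm{x}^{-} \geq \Delta^{-1}\bm{c}^{-}$... — actually the cleaner route is the two inequalities $\bm{x} \geq \Delta^{-1}\bm{b}$, which yields $\bm{x}^{-} \leq \Delta\bm{b}^{-}$ and hence $\bm{x}^{-}\bm{b} \leq \Delta\bm{b}^{-}\bm{b} = \Delta$; and $\bm{x} \leq \Delta\bm{c}$, which gives $\bm{c}^{-}\bm{x} \leq \Delta\bm{c}^{-}\bm{c} = \Delta$. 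Adding these, $\bm{x}^{-}\bm{b}\oplus\bm{c}^{-}\bm{x} \leq \Delta$. Since $\Delta$ is the established minimum, equality holds, so every such $\bm{x}$ is a minimizer.

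One should double-check the hypotheses: the interval is nonempty precisely because $\Delta^{-1}\bm{b} \leq \Delta\bm{c}$, which is equivalent to $\Delta^{2} = \bm{c}^{-}\bm{b} \geq$ the relevant componentwise ratios — more precisely, $\{\Delta^{-1}\bm{b}\}_i = \Delta^{-1}b_i \leq \Delta c_i = \{\Delta\bm{c}\}_i$ iff $b_i c_i^{-1} \leq \Delta^2$, and taking the $\oplus$ over $i$ gives exactly $\bm{c}^{-}\bm{b} \leq \Delta^2$, which holds with equality by definition of $\Delta$. So the interval is always nonempty here, and in particular $\bm{x} = \Delta\bm{c}$ lies in it, consistent with the attainment claim. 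I do not expect any real obstacle; the only mild subtlety is being careful that all pseudo-inversions are applied to regular vectors (guaranteed since $\bm{b}$, $\bm{c}$ are regular and any $\bm{x}$ in the interval is then regular too, being bounded below by the regular vector $\Delta^{-1}\bm{b}$), so that the identities $\bm{b}^{-}\bm{b} = \bm{c}^{-}\bm{c} = \mathbb{1}$ and the order-reversal property are all legitimately available.
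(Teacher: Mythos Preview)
Your proposal is correct and follows the same approach as the paper: specialize Theorem~\ref{T-AxbcAx} and Corollary~\ref{C-AxbcAx}(i) to $A=I$. The paper's own proof is a one-line reference to \eqref{E-AxbcAx} and \eqref{I-DbAxDc}; you are in fact more thorough, since \eqref{I-DbAxDc} only supplies the necessary-condition direction, and you explicitly verify that every $\bm{x}$ in the interval attains the value $\Delta$.
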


\begin{proof}
The statements of the Theorem directly follow from \eqref{E-AxbcAx} and \eqref{I-DbAxDc} provided that $A=I$.
\end{proof}

\section{Unconstrained Location Problem}

In this section we examine a minimax single facility location problem with Chebyshev distance when no constraints are imposed on the feasible location area.

Given $m$ vectors $\bm{r}_{i}=(r_{1i},\ldots,r_{ni})^{T}\in\mathbb{R}^{n}$ and constants $w_{i}\in\mathbb{R}$, $i=1,\ldots,m$, the problem is to determine the vectors $\bm{x}\in\mathbb{R}^{n}$ that provide
\begin{equation}
\min_{\bm{x}\in\mathbb{R}^{n}}\max_{1\leq i\leq m}(\rho(\bm{r}_{i},\bm{x})+w_{i}).
\label{P-Chebyshev}
\end{equation}

The problem is known as the unweighted Rawls problem with addends \cite{Hansen81Constrained}. In accordance with the nomenclature of \cite{Elzinga72Geometrical}, it can be referred to as the multidimensional Chebyshev Messenger Boy Problem.

It is not difficult to solve the problem on the plane by using geometric arguments \cite{Sule01Logistics,Moradi09Single}. Below we give a new algebraic solution that is based on representation of the problem in terms of the semifield $\mathbb{R}_{\max,+}$, and application of the results from the previous section.

First we denote the objective function in problem \eqref{P-Chebyshev} by $\varphi(\bm{x})$ and write
$$
\varphi(\bm{x})
=
\bigoplus_{i=1}^{m}w_{i}\rho(\bm{r}_{i},\bm{x}).
$$

Furthermore, we introduce the vectors
$$
\bm{p}
=
w_{1}\bm{r}_{1}\oplus\cdots\oplus w_{m}\bm{r}_{m},
\qquad
\bm{q}^{-}
=
w_{1}\bm{r}_{1}^{-}\oplus\cdots\oplus w_{m}\bm{r}_{m}^{-}.
$$

Writing the metric in terms of $\mathbb{R}_{\max,+}$, we have
$$
\varphi(\bm{x})
=
\bigoplus_{i=1}^{m}w_{i}(\bm{x}^{-}\bm{r}_{i}\oplus\bm{r}_{i}^{-}\bm{x})
=
\bm{x}^{-}\bm{p}
\oplus
\bm{q}^{-}\bm{x},
$$
and then represent problem \eqref{P-Chebyshev} as
\begin{equation}\label{P-Chebyshev1}
\min_{\bm{x}\in\mathbb{R}^{n}}(\bm{x}^{-}\bm{p}\oplus\bm{q}^{-}\bm{x}).
\end{equation}

Application of Theorem~\ref{T-xbcx} leads us to the following results which conform with that in \cite{Krivulin11Algebraic,Krivulin11Analgebraic}.
\begin{theorem}
\label{T-Chebyshev}
The minimum in problem \eqref{P-Chebyshev1} is given by
$$
\Delta
=
(\bm{q}^{-}\bm{p})^{1/2},
$$
with the minimum attained at any vector $\bm{x}$ such that
$$
\Delta^{-1}\bm{p}
\leq
\bm{x}
\leq
\Delta\bm{q}.
$$
\end{theorem}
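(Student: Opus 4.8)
The plan is to observe that Theorem~\ref{T-Chebyshev} is almost immediately a corollary of Theorem~\ref{T-xbcx}, so the work is really in checking that the hypotheses of Theorem~\ref{T-xbcx} are satisfied and that the notation matches. First I would note that problem \eqref{P-Chebyshev1} has exactly the form $\min_{\bm{x}\in\mathbb{X}_{+}^{n}}(\bm{x}^{-}\bm{b}\oplus\bm{c}^{-}\bm{x})$ treated in Theorem~\ref{T-xbcx}, with $\bm{b}=\bm{p}$ and $\bm{c}=\bm{q}$, working in the semifield $\mathbb{X}=\mathbb{R}_{\max,+}$, for which $\mathbb{R}^{n}=\mathbb{X}_{+}^{n}$ (every real vector is regular since the only non-regular entry would be $-\infty=\mathbb{0}$). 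Then $\Delta=(\bm{q}^{-}\bm{p})^{1/2}$ is precisely the quantity $\Delta=(\bm{c}^{-}\bm{b})^{1/2}$ of Theorem~\ref{T-xbcx}, and the conclusion---minimum equal to $\Delta$, attained at any $\bm{x}$ with $\Delta^{-1}\bm{p}\leq\bm{x}\leq\Delta\bm{q}$---is a verbatim restatement of that theorem's conclusion.

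Before invoking Theorem~\ref{T-xbcx} I must verify its hypotheses, namely that $\bm{b}=\bm{p}$ and $\bm{c}=\bm{q}$ are regular vectors. Since each $\bm{r}_{i}\in\mathbb{R}^{n}$ is regular, each $w_{i}\bm{r}_{i}$ is regular (scalar multiplication by the finite $w_{i}$ preserves finiteness of entries), and a finite $\oplus$-sum of regular vectors is regular; hence $\bm{p}=\bigoplus_{i}w_{i}\bm{r}_{i}$ is regular. For $\bm{q}$, the definition gives $\bm{q}^{-}=\bigoplus_{i}w_{i}\bm{r}_{i}^{-}$; each $\bm{r}_{i}^{-}$ is a regular row vector, so $\bm{q}^{-}$ is regular, and consequently so is $\bm{q}$ (pseudo-inversion of a regular vector yields a regular vector, and $(\bm{q}^{-})^{-}=\bm{q}$). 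With regularity in hand, Theorem~\ref{T-xbcx} applies directly and yields the claim.

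The one point that deserves a line of justification is the passage, already carried out in the text preceding the theorem, from the original objective $\varphi(\bm{x})=\bigoplus_{i=1}^{m}w_{i}\rho(\bm{r}_{i},\bm{x})$ to the form $\bm{x}^{-}\bm{p}\oplus\bm{q}^{-}\bm{x}$; I would simply recall that $\rho(\bm{r}_{i},\bm{x})=\bm{x}^{-}\bm{r}_{i}\oplus\bm{r}_{i}^{-}\bm{x}$ and that, by distributivity and commutativity of $\oplus$, $\bigoplus_{i}w_{i}(\bm{x}^{-}\bm{r}_{i}\oplus\bm{r}_{i}^{-}\bm{x})=\bm{x}^{-}\bigl(\bigoplus_{i}w_{i}\bm{r}_{i}\bigr)\oplus\bigl(\bigoplus_{i}w_{i}\bm{r}_{i}^{-}\bigr)\bm{x}=\bm{x}^{-}\bm{p}\oplus\bm{q}^{-}\bm{x}$, using the scalar identity $w_{i}(\bm{x}^{-}\bm{r}_{i})=\bm{x}^{-}(w_{i}\bm{r}_{i})$ valid because scalars commute with the matrix/vector products. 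Since this reduction is already stated in the excerpt, the proof itself can be a single sentence citing Theorem~\ref{T-xbcx}.

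I do not anticipate any real obstacle here: the statement is a specialization of the preceding theorem, and the only substantive content is the routine verification of regularity of $\bm{p}$ and $\bm{q}$, which is immediate in $\mathbb{R}_{\max,+}$ because all data $w_{i}$ and $r_{ji}$ are finite real numbers. If anything, the subtle point to state carefully is that the minimization over $\bm{x}\in\mathbb{R}^{n}$ in \eqref{P-Chebyshev1} coincides with minimization over $\mathbb{X}_{+}^{n}$ in Theorem~\ref{T-xbcx}, which holds because in $\mathbb{R}_{\max,+}$ a vector is regular precisely when all its coordinates are finite, i.e.\ lie in $\mathbb{R}$.
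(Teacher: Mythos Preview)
Your proposal is correct and matches the paper's own approach: the paper does not give a separate proof of Theorem~\ref{T-Chebyshev} but simply introduces it with the sentence ``Application of Theorem~\ref{T-xbcx} leads us to the following results,'' i.e.\ exactly the specialization $\bm{b}=\bm{p}$, $\bm{c}=\bm{q}$ that you describe. Your additional checks (regularity of $\bm{p}$ and $\bm{q}$, and that $\mathbb{R}^{n}=\mathbb{X}_{+}^{n}$ in $\mathbb{R}_{\max,+}$) are appropriate and make explicit what the paper leaves implicit.
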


With the usual notation, we can reformulate the statement of Theorem~\ref{T-Chebyshev} as follows.
\begin{corollary}
Suppose that for each $i=1,\ldots,n$
$$
p_{i}
=
\max(r_{i1}+w_{1},\ldots,r_{im}+w_{m}),
\qquad
q_{i}
=
\min(r_{i1}-w_{1},\ldots,r_{im}-w_{m}).
$$

Then the minimum in \eqref{P-Chebyshev} is given by
$$
\Delta
=
\max(p_{1}-q_{1},\ldots,p_{n}-q_{n})/2,
$$
and attained at any vector $\bm{x}=(x_{i})$ with elements
$$
p_{i}-\Delta
\leq
x_{i}
\leq
q_{i}+\Delta,
\quad
i=1,\ldots,n.
$$
\end{corollary}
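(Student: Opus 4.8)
The plan is to show that the Corollary is simply a translation of Theorem~\ref{T-Chebyshev} from idempotent notation back into ordinary arithmetic, component by component. First I would recall that throughout the paper the semifield in play is $\mathbb{R}_{\max,+}$, where $\oplus$ is $\max$, $\otimes$ is $+$, the identity $\mathbb{1}$ is the real number $0$, and a power $x^{1/2}$ corresponds to arithmetic $x/2$. I would also recall that for a vector $\bm{y}=(y_{i})$ the pseudo-inverse $\bm{y}^{-}$ is the row vector with entries $-y_{i}$ (all entries being finite here since the $\bm{r}_{i}$ are real and the $w_{i}$ real).

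The key computations are three. First, unfold the definition $\bm{p}=w_{1}\bm{r}_{1}\oplus\cdots\oplus w_{m}\bm{r}_{m}$ componentwise: the $i$-th entry is $\max_{1\le j\le m}(w_{j}+r_{ij})$, which is exactly the $p_{i}$ in the Corollary. Second, unfold $\bm{q}^{-}=w_{1}\bm{r}_{1}^{-}\oplus\cdots\oplus w_{m}\bm{r}_{m}^{-}$: its $i$-th entry is $\max_{1\le j\le m}(w_{j}-r_{ij})$, so the $i$-th entry of $\bm{q}$ (undoing the pseudo-inverse) is $-\max_{j}(w_{j}-r_{ij})=\min_{j}(r_{ij}-w_{j})$, matching $q_{i}$. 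Third, evaluate $\Delta=(\bm{q}^{-}\bm{p})^{1/2}$: the scalar product $\bm{q}^{-}\bm{p}$ in $\mathbb{R}_{\max,+}$ is $\max_{i}(q_{i}^{-}+p_{i})=\max_{i}(p_{i}-q_{i})$, and taking the idempotent square root halves it, giving $\Delta=\max_{i}(p_{i}-q_{i})/2$. Finally, the vector bound $\Delta^{-1}\bm{p}\le\bm{x}\le\Delta\bm{q}$ reads componentwise, since $\Delta^{-1}$ is arithmetic $-\Delta$ and scalar multiplication is arithmetic addition, as $p_{i}-\Delta\le x_{i}\le q_{i}+\Delta$ for every $i$; note $\Delta\bm{q}$ has $i$-th entry $q_{i}+\Delta$ because multiplication distributes over the entries of $\bm{q}$ itself, not its pseudo-inverse. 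Along the way I should also note that problems \eqref{P-Chebyshev} and \eqref{P-Chebyshev1} are literally the same once $\rho$, $\max$, and $+$ are rewritten in semifield notation, as already established in the text preceding Theorem~\ref{T-Chebyshev}, so the Corollary's claim about the minimum in \eqref{P-Chebyshev} follows from the claim about \eqref{P-Chebyshev1}.

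There is essentially no analytic obstacle here; the work is bookkeeping. The one place to be careful is the interaction between pseudo-inversion and scalar multiplication: $\bm{q}$ is defined only indirectly, through $\bm{q}^{-}$, so I must take care that ``$\Delta\bm{q}$'' means the honest vector $\bm{q}$ (entries $q_{i}$) scaled by $\Delta$, giving $q_{i}+\Delta$, rather than anything involving the sign flip — in particular one should not confuse $(\Delta\bm{q})^{-}$ with $\Delta\bm{q}^{-}$, the latter being false. Likewise I should confirm that $q_i$ is finite, i.e.\ that $\bm{q}$ is regular, which holds because each $r_{ij}$ and $w_j$ is real, so that Theorem~\ref{T-Chebyshev} (inherited from Theorem~\ref{T-xbcx}, which requires regular $\bm{b},\bm{c}$) genuinely applies. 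Once these points are checked, the identities above assemble immediately into the statement, and the proof is complete.
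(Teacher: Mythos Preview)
Your proposal is correct and matches the paper's approach: the paper presents this Corollary simply as a reformulation of Theorem~\ref{T-Chebyshev} ``with the usual notation'' and gives no separate proof, so the componentwise translation from $\mathbb{R}_{\max,+}$ to ordinary arithmetic that you spell out is exactly what is intended. Your checks on $p_{i}$, $q_{i}$, $\Delta$, and the bounds are all accurate.
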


An illustration of the solution in $\mathbb{R}^{2}$ for two problems with all $w_{i}=0$ is provided in Fig.~\ref{F-Chebyshev1}, where the given points are represented by thick dots. The solution involves drawing a minimal upright rectangle enclosing all points. The solution set is represented by thick line segments that go through the centers of the long sides in the rectangle between two slanting lines drawn through the vertices of the rectangle.

\begin{figure}[ht]
\setlength{\unitlength}{1mm}
\begin{center}
\begin{picture}(35,60)

\put(0,5){\vector(1,0){32}}
\put(5,0){\vector(0,1){55}}

\put(8,48){\line(0,-1){44}}
\put(23,48){\line(0,-1){44}}

\put(23,10){\line(-1,0){19}}
\put(23,48){\line(-1,0){19}}

\put(8,10){\vector(1,1){19}}

\put(23,48){\vector(-1,-1){19}}

\put(4,29){\linethickness{2pt}\line(1,0){23}}

\put(8,43){\circle*{1.5}}
\put(11,10){\circle*{1.5}}
\put(16,33){\circle*{1.5}}
\put(17,48){\circle*{1.5}}
\put(20,41){\circle*{1.5}}
\put(21,13){\circle*{1.5}}
\put(23,22){\circle*{1.5}}
\put(15,23){\circle*{1.5}}

\put(7,1){$q_{1}$}
\put(22,1){$p_{1}$}


\put(-0.5,48){$p_{2}$}
\put(-0.5,10){$q_{2}$}


\end{picture}
\hspace{20\unitlength}
\begin{picture}(40,60)

\put(0,5){\vector(1,0){40}}
\put(5,0){\vector(0,1){55}}

\put(8,35){\line(0,-1){31}}
\put(34,35){\line(0,-1){31}}

\put(34,15){\line(-1,0){30}}
\put(34,35){\line(-1,0){30}}

\put(8,15){\vector(1,1){13}}

\put(34,35){\vector(-1,-1){13}}

\put(21,22){\linethickness{2pt}\line(0,1){6}}

\put(13,33){\circle*{1.5}}
\put(16,15){\circle*{1.5}}
\put(8,24){\circle*{1.5}}
\put(22,35){\circle*{1.5}}
\put(29,32){\circle*{1.5}}
\put(18,30){\circle*{1.5}}
\put(28,22){\circle*{1.5}}
\put(34,19){\circle*{1.5}}

\put(7,1){$q_{1}$}
\put(32,1){$p_{1}$}


\put(-0.5,35){$p_{2}$}
\put(-0.5,15){$q_{2}$}


\end{picture}
\end{center}
\caption{Solutions in $\mathbb{R}^{2}$ when all $w_{i}=0$.}\label{F-Chebyshev1}
\end{figure}
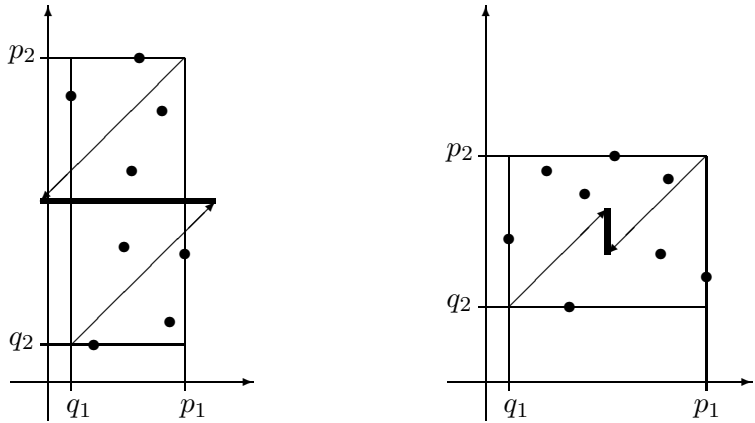

Fig.~\ref{F-Chebyshev3} offers an illustration to a problem with arbitrary constants $w_{i}$. Together with the solution of this problem (right picture), we also give the solution to a corresponding problem that has all $w_{i}$ set to zero (left picture). To get the solution, we first replace each given point with two new points shown with empty circles, and then draw their related minimal rectangle.

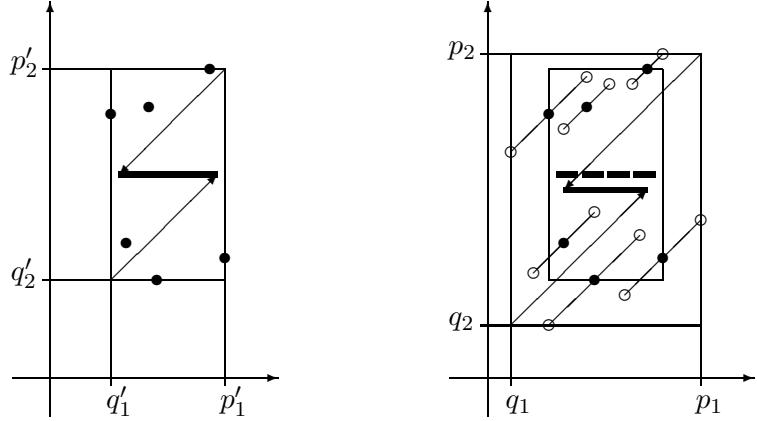
\begin{figure}[ht]
\setlength{\unitlength}{1mm}
\begin{center}
\begin{picture}(35,60)

\put(0,5){\vector(1,0){35}}
\put(5,0){\vector(0,1){55}}

\put(13,46){\line(0,-1){42}}
\put(28,46){\line(0,-1){42}}

\put(28,18){\line(-1,0){24}}
\put(28,46){\line(-1,0){24}}

\put(13,18){\vector(1,1){14}}

\put(28,46){\vector(-1,-1){14}}

\put(14,32){\linethickness{2pt}\line(1,0){13}}

\put(15,23){\circle*{1.5}}

\put(18,41){\circle*{1.5}}

\put(26,46){\circle*{1.5}}

\put(13,40){\circle*{1.5}}

\put(28,21){\circle*{1.5}}

\put(19,18){\circle*{1.5}}

\put(12,1){$q_{1}^{\prime}$}
\put(27,1){$p_{1}^{\prime}$}


\put(-0.5,46){$p_{2}^{\prime}$}
\put(-0.5,18){$q_{2}^{\prime}$}


\end{picture}
\hspace{20\unitlength}
\begin{picture}(40,60)

\put(0,5){\vector(1,0){40}}
\put(5,0){\vector(0,1){55}}

\put(13,46){\line(0,-1){28}}
\put(28,46){\line(0,-1){28}}

\put(28,18){\line(-1,0){15}}
\put(28,46){\line(-1,0){15}}

\multiput(14,32)(3.4,0){4}{\linethickness{2pt}\line(1,0){2.8}}

\put(8,48){\line(0,-1){44}}
\put(33,48){\line(0,-1){44}}

\put(33,12){\line(-1,0){29}}
\put(33,48){\line(-1,0){29}}

\put(8,12){\vector(1,1){18}}

\put(33,48){\vector(-1,-1){18}}

\put(15,30){\linethickness{2pt}\line(1,0){11}}

\put(15,23){\circle*{1.5}}
\put(15,23){\line(1,1){4}}
\put(19,27){\circle{1.5}}
\put(15,23){\line(-1,-1){4}}
\put(11,19){\circle{1.5}}

\put(18,41){\circle*{1.5}}
\put(15,38){\line(1,1){6}}
\put(21,44){\circle{1.5}}
\put(15,38){\circle{1.5}}

\put(26,46){\circle*{1.5}}
\put(24,44){\line(1,1){4}}
\put(28,48){\circle{1.5}}
\put(24,44){\circle{1.5}}

\put(13,40){\circle*{1.5}}
\put(13,40){\line(1,1){5}}
\put(18,45){\circle{1.5}}
\put(13,40){\line(-1,-1){5}}
\put(8,35){\circle{1.5}}

\put(28,21){\circle*{1.5}}
\put(28,21){\line(1,1){5}}
\put(33,26){\circle{1.5}}
\put(28,21){\line(-1,-1){5}}
\put(23,16){\circle{1.5}}

\put(19,18){\circle*{1.5}}
\put(19,18){\line(1,1){6}}
\put(25,24){\circle{1.5}}
\put(19,18){\line(-1,-1){6}}
\put(13,12){\circle{1.5}}

\put(7,1){$q_{1}$}
\put(32,1){$p_{1}$}


\put(-0.5,48){$p_{2}$}
\put(-0.5,12){$q_{2}$}


\end{picture}
\end{center}
\caption{Solution to a problem with nonzero $w_{i}$.}\label{F-Chebyshev3}
\end{figure}

\section{Constrained Location Problems}

Given a feasible set $S\in\mathbb{R}^{n}$, we now consider a constrained location problem
$$
\min_{\bm{x}\in S}\max_{1\leq i\leq m}(\rho(\bm{r}_{i},\bm{x})+w_{i}).
$$

We suppose that the feasible location area $S$ is defined either by equality constraints as
$$
S_{0}
=
\left\{\bm{x}\left|\max_{1\leq j\leq n}(a_{ij}+x_{j})=x_{i},\ i=1,\ldots,n\right.\right\},
$$
or by inequality constraints as
$$
S_{1}
=
\left\{\bm{x}\left|\max_{1\leq j\leq n}(a_{ij}+x_{j})\leq x_{i},\ i=1,\ldots,n\right.\right\}.
$$

In the two-dimensional case, the constraints determine regions that are given by the intersection of half-planes with their border lines drawn at $45^{\circ}$ angle to the coordinate axes on the plane. Specifically, the intersection can take the form of a strip that can be considered as a quite natural restriction for the feasible area in the Chebyshev Messenger Boy Problem.

\subsection{Algebraic Representation and Solution}

First we represent the objective function in terms of the idempotent semifield $\mathbb{R}_{\max,+}$ to get
\begin{equation}
\min_{\bm{x}\in S}\varphi(\bm{x}),
\label{P-ChebyshevConstrained1}
\end{equation}
where
$$
\varphi(\bm{x})
=
\bm{x}^{-}\bm{p}
\oplus
\bm{q}^{-}\bm{x}.
$$

The feasible sets can be written as
$$
S_{0}
=
\{\bm{x}| A\bm{x}=\bm{x}\},
\qquad
S_{1}
=
\{\bm{x}| A\bm{x}\leq\bm{x}\},
$$

\begin{theorem}
\label{T-LPEC}
Suppose that $A$ is an irreducible matrix with $\mathop\mathrm{Tr}(A)=\mathbb{1}$, and assume that
$$
\Delta
=
\sqrt{(A^{+}(\bm{q}^{-}A^{+})^{-})^{-}\bm{p}}.
$$

Then it holds that
$$
\min_{\bm{x}\in S_{0}}\varphi(\bm{x})
=
\Delta,
$$
where the minimum is attained at the vector
$$
\bm{x}
=
\Delta A^{+}(\bm{q}^{-}A^{+})^{-}.
$$
\end{theorem}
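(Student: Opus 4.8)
The plan is to eliminate the constraint $\bm{x}\in S_{0}$ by parametrizing it, and thereby reduce the problem to the already-solved problem~\eqref{P-minphix}. Since $A$ is irreducible with $\mathop\mathrm{Tr}(A)=\mathbb{1}$, Theorem~\ref{T-IMNHEGS} describes the solution set $S_{0}$ of equation~\eqref{E-Axbx} as the set of all vectors of the form $\bm{x}=A^{+}\bm{v}$. Substituting this representation into the objective $\varphi$ turns \eqref{P-ChebyshevConstrained1} into
$$
\min_{\bm{v}}\left((A^{+}\bm{v})^{-}\bm{p}\oplus\bm{q}^{-}A^{+}\bm{v}\right),
$$
which is precisely problem~\eqref{P-minphix} with the matrix $A^{+}$ in place of $A$ and with $\bm{b}=\bm{p}$, $\bm{c}=\bm{q}$. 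Applying Theorem~\ref{T-AxbcAx} then gives at once the minimum value $\Delta=\sqrt{(A^{+}(\bm{q}^{-}A^{+})^{-})^{-}\bm{p}}$, attained at $\bm{v}=\Delta(\bm{q}^{-}A^{+})^{-}$, hence at $\bm{x}=A^{+}\bm{v}=\Delta A^{+}(\bm{q}^{-}A^{+})^{-}$, which is the assertion.

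The real content of the proof is to verify that Theorem~\ref{T-AxbcAx} is applicable to the reduced problem. That $\bm{p}$ and $\bm{q}$ are regular is immediate, since each is a finite $\max$-combination of the finite data vectors $\bm{r}_{i}$ with finite weights $w_{i}$. The point requiring work is that $A^{+}$ is a regular matrix. I would read this off from the construction of $A^{+}$: irreducibility of $A$ makes every entry of $A^{\ast}=I\oplus A\oplus\cdots\oplus A^{n-1}$, and therefore of $A^{\times}=AA^{\ast}=A\oplus\cdots\oplus A^{n}$, different from $\mathbb{0}$; moreover $\mathop\mathrm{Tr}(A)=\mathbb{1}$ gives $a_{ii}^{\times}\leq\mathbb{1}$ for every $i$ together with $a_{ii}^{\times}=\mathbb{1}$ for at least one $i$, because $\bigoplus_{m}\mathop\mathrm{tr}A^{m}=\mathbb{1}$ forces some $\mathop\mathrm{tr}A^{m}$, hence some $(A^{m})_{ii}$, to equal $\mathbb{1}$. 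Thus at least one column of $A^{\times}$ survives the passage to $A^{+}$, every surviving column has all entries nonzero, and so $A^{+}$ has no zero row, even after linearly dependent columns are dropped.

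The remaining thing to handle with care is the matching of the domain $\bm{v}\in\mathbb{X}_{+}^{n}$ of Theorem~\ref{T-AxbcAx} with the feasible set $S_{0}$ of the location problem (which should be read as consisting of regular vectors, the degenerate point $\bm{x}=\mathbb{0}$ being excluded from the minimization). On one side, the candidate $\bm{x}=\Delta A^{+}(\bm{q}^{-}A^{+})^{-}$ is regular --- the regular matrix $A^{+}$ times the regular vector $(\bm{q}^{-}A^{+})^{-}$ scaled by the finite factor $\Delta$ --- and it lies in $S_{0}$, so it is feasible and realizes the value $\Delta$. On the other side, for the matching lower bound I would use that a nonzero solution of $A\bm{x}=\bm{x}$ for an irreducible $A$ has no zero entries, so that every relevant $\bm{x}\in S_{0}$ is regular and, lying in the linear span of the all-finite columns of $A^{+}$, can be written as $A^{+}\bm{v}$ with a regular $\bm{v}$; then the lower-bound half of the proof of Theorem~\ref{T-AxbcAx}, which uses only $\bm{v}\bm{v}^{-}\geq I$ and the isotonicity of pseudo-inversion, yields $\varphi(\bm{x})=\varphi(A^{+}\bm{v})\geq\Delta$. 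I expect the regularity of $A^{+}$ together with this reconciliation of domains to be the only genuine obstacle; once granted, the theorem follows immediately from Theorems~\ref{T-IMNHEGS} and~\ref{T-AxbcAx} with no further computation.
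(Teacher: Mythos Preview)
Your argument is correct and follows exactly the paper's route: parametrize $S_{0}$ via $\bm{x}=A^{+}\bm{v}$ using Theorem~\ref{T-IMNHEGS}, substitute into $\varphi$, and apply Theorem~\ref{T-AxbcAx} to the resulting unconstrained problem in $\bm{v}$. The paper's proof is terser and does not pause to verify that $A^{+}$ is regular or to reconcile the domains of $\bm{v}$ and $\bm{x}$; your explicit treatment of these points is more careful than the original.
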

\begin{proof}
Since $\mathop\mathrm{Tr}(A)=\mathbb{1}$ all solutions of the equation $A\bm{x}=\bm{x}$ take the form
$$
\bm{x}
=
A^{+}\bm{y}
$$
for any vector $\bm{y}\in\mathbb{R}^{m}$, where $m$ is the number of columns in the matrix $A^{+}$, $m\leq n$.

Substitution $\bm{x}=A^{+}\bm{y}$ into the objective function $\varphi(\bm{x})$ turns the problem \eqref{P-ChebyshevConstrained1} into an unconstrained problem of finding
$$
\min_{\bm{y}\in\mathbb{R}_{+}^{m}}((A^{+}\bm{y})^{-}\bm{p}\oplus\bm{q}^{-}A^{+}\bm{y}).
$$

To solve this new problem we apply Theorem~\ref{T-AxbcAx}. First we evaluate
$$
\Delta
=
\sqrt{(A^{+}(\bm{q}^{-}A^{+})^{-})^{-}\bm{p}},
$$
and then conclude that the minimum in the problem is equal to $\Delta$ and attained at $\bm{y}=\Delta(\bm{q}^{-}A^{+})^{-}$.

Going back to the constrained problem, we finally get $\bm{x}=\Delta A^{+}(\bm{q}^{-}A^{+})^{-}$.
\end{proof}

\begin{theorem}
\label{T-LPIC}
Suppose that $A$ is an irreducible matrix with $\mathop\mathrm{Tr}(A)\leq\mathbb{1}$, and assume that
$$
\Delta
=
\sqrt{(A^{\ast}(\bm{q}^{-}A^{\ast})^{-})^{-}\bm{p}}.
$$

Then it holds that
$$
\min_{\bm{x}\in S_{1}}\varphi(\bm{x})
=
\Delta,
$$
where the minimum is attained at the vector
$$
\bm{x}
=
\Delta A^{\ast}(\bm{q}^{-}A^{\ast})^{-}.
$$
\end{theorem}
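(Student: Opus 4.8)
The plan is to mimic the proof of Theorem~\ref{T-LPEC}, replacing the second-kind equation by the corresponding second-kind inequality. First I would invoke the cited solution of the inequality $A\bm{x}\leq\bm{x}$: since $A$ is irreducible and $\mathop\mathrm{Tr}(A)\leq\mathbb{1}$, every solution has the form $\bm{x}=A^{\ast}\bm{v}$ for an arbitrary vector $\bm{v}$, and conversely each such vector satisfies the inequality. Because $A^{\ast}=I\oplus A\oplus\cdots\oplus A^{n-1}$ has all diagonal entries equal to $\mathbb{1}$, it has no zero rows, so $A^{\ast}\bm{v}$ is regular whenever $\bm{v}$ is; and since $A\bm{x}\leq\bm{x}$ forces $A^{\ast}\bm{x}=\bm{x}$, every regular point of $S_{1}$ arises from a regular $\bm{v}$. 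Hence minimizing $\varphi$ over $S_{1}$ is equivalent to the unconstrained problem
$$
\min_{\bm{v}\in\mathbb{R}_{+}^{n}}\bigl((A^{\ast}\bm{v})^{-}\bm{p}\oplus\bm{q}^{-}A^{\ast}\bm{v}\bigr).
$$

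Next I would apply Theorem~\ref{T-AxbcAx} to this problem, taking $A^{\ast}$ in place of the matrix $A$, the vector $\bm{p}$ in place of $\bm{b}$, and $\bm{q}$ in place of $\bm{c}$. The hypotheses hold: $A^{\ast}$ is regular as just observed, while $\bm{p}=\bigoplus_{i}w_{i}\bm{r}_{i}$ and $\bm{q}^{-}=\bigoplus_{i}w_{i}\bm{r}_{i}^{-}$ are built from finite real vectors, so all their entries are finite and $\bm{p},\bm{q}$ are regular. Theorem~\ref{T-AxbcAx} then gives that the minimum equals
$$
\Delta
=
\sqrt{(A^{\ast}(\bm{q}^{-}A^{\ast})^{-})^{-}\bm{p}},
$$
attained at $\bm{v}=\Delta(\bm{q}^{-}A^{\ast})^{-}$.

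Finally, substituting back through $\bm{x}=A^{\ast}\bm{v}$ yields the claimed optimal vector $\bm{x}=\Delta A^{\ast}(\bm{q}^{-}A^{\ast})^{-}$, with the optimal value still $\Delta$. I do not expect a serious obstacle here: the argument is the inequality-counterpart of Theorem~\ref{T-LPEC}, with $A^{+}$ replaced by $A^{\ast}$ throughout. The only points genuinely needing care are the verification that $A^{\ast}$ is regular (which is exactly what licenses the application of Theorem~\ref{T-AxbcAx}), and the check that passing from $\bm{x}$ to the parameter $\bm{v}$ neither enlarges nor restricts the set of \emph{regular} feasible points, so that the reduction to the unconstrained problem is faithful.
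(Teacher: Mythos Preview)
Your proposal is correct and follows essentially the same approach as the paper: parameterize $S_{1}$ via $\bm{x}=A^{\ast}\bm{v}$ using the cited solution of the second-kind inequality, reduce to the unconstrained problem, and apply Theorem~\ref{T-AxbcAx}. You are in fact more careful than the paper, which simply states that the argument is identical to that of Theorem~\ref{T-LPEC} with $A^{\ast}$ in place of $A^{+}$; your explicit checks that $A^{\ast}$ is regular and that the reduction is faithful on regular vectors are exactly the points the paper leaves implicit.
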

\begin{proof}
It is sufficient to note that the solution of the inequality $A\bm{x}\leq\bm{x}$ is written in the form
$$
\bm{x}
=
A^{\ast}\bm{y}
$$
for any vector $\bm{y}$ of appropriate size. All further arguments are the same as in the previous theorem.
\end{proof}

\subsection{An Example}

Consider a minimax single facility location problem with $m=2$, $w_{1}=w_{2}=0$, and
$$
\bm{r}_{1}
=
\left(
\begin{array}{r}
-2
\\
5
\end{array}
\right),
\qquad
\bm{r}_{2}
=
\left(
\begin{array}{c}
6
\\
13
\end{array}
\right).
$$

Note that for this problem we have
$$
\bm{p}
=
\bm{r}_{2},
\qquad
\bm{q}
=
\bm{r}_{1}.
$$

First assume that there are no constraints for the feasible location area. Following Theorem~\ref{T-Chebyshev}, we get
$$
\Delta
=
(\bm{q}^{-}\bm{p})^{1/2}
=
4.
$$

Furthermore, we obtain
$$
\bm{x}
=
\Delta^{-1}\bm{p}
=
\Delta\bm{q}
=
\left(
\begin{array}{c}
2
\\
9
\end{array}
\right).
$$

Now consider the same problem under equality constraints
\begin{align*}
\max(x_{1},x_{2}-3)
&=
x_{1},
\\
\max(x_{1}-5,x_{2}-2)
&=
x_{2}.
\end{align*}

Representation of these constraints in terms of $\mathbb{R}_{\max,+}$ gives the vector equation
$$
A\bm{x}
=
\bm{x},
$$
where
$$
A
=
\left(
\begin{array}{rr}
0 & -3
\\
-5 & -2
\end{array}
\right).
$$

It is easy to see that the matrix $A$ is irreducible and $\mathop\mathrm{Tr}(A)=\mathbb{1}=0$. To apply Theorem~\ref{T-LPEC} we first calculate the matrices
$$
A^{\ast}
=
I\oplus A
=
\left(
\begin{array}{rr}
0 & -3
\\
-5 & 0
\end{array}
\right),
\qquad
A^{\times}
=
AA^{\ast}
=
\left(
\begin{array}{rr}
0 & -3
\\
-5 & -2
\end{array}
\right).
$$

Since only the first column in $A^{\times}$ has $\mathbb{1}=0$ on the diagonal, we take
$$
A^{+}
=
\left(
\begin{array}{r}
0
\\
-5
\end{array}
\right).
$$

Now we successively get
$$
\bm{q}^{-}A^{+}
=
2,
\qquad
A^{+}(\bm{q}^{-}A^{+})^{-}
=
\left(
\begin{array}{r}
-2
\\
-7
\end{array}
\right).
$$

Finally, we evaluate
$$
\Delta
=
\sqrt{(A^{+}(\bm{q}^{-}A^{+})^{-})^{-}\bm{p}}
=
10,
$$
and then find the solution
$$
\bm{x}
=
\Delta A^{+}(\bm{q}^{-}A^{+})^{-}
=
\left(
\begin{array}{c}
8
\\
3
\end{array}
\right).
$$

Suppose that the constraints take the form of the inequality
$$
A\bm{x}
\leq
\bm{x}.
$$

According to Theorem~\ref{T-LPIC}, we calculate
$$
\bm{q}^{-}A^{\ast}
=
\left(
\begin{array}{cc}
2 & -1
\end{array}
\right),
\qquad
A^{\ast}(\bm{q}^{-}A^{\ast})^{-}
=
\left(
\begin{array}{r}
-2
\\
1
\end{array}
\right).
$$

Then we have
$$
\Delta
=
\sqrt{(A^{\ast}(\bm{q}^{-}A^{\ast})^{-})^{-}\bm{p}}
=
6,
$$
and eventually get the solution
$$
\bm{x}
=
\Delta A^{\ast}(\bm{q}^{-}A^{\ast})^{-}
=
\left(
\begin{array}{r}
4
\\
7
\end{array}
\right).
$$

We illustrate the above solutions in Fig.~\ref{F-LPEIC}, where the given points are indicated with empty circles, whereas the location points are shown with thick dots. The solution for the unconstrained problem is located in the center of the squares, which represent isolines of the objective function. The thick dotes in bottom right vertices of squares correspond to the solution under the constraints $A\bm{x}=\bm{x}$ (left picture) and $A\bm{x}\leq\bm{x}$ (right picture) with the matrix $A=(\bm{a}_{1},\bm{a}_{2})$.
\begin{figure}[ht]
\setlength{\unitlength}{1mm}
\begin{center}
\begin{picture}(35,50)

\put(0,15){\vector(1,0){35}}
\put(10,0){\vector(0,1){50}}

\put(10,15){\thicklines\vector(-3,-2){6}}

\put(3,10){\line(1,1){25}}

\put(10,15){\thicklines\vector(0,-1){10}}
\put(7,2){\thicklines\line(1,1){24}}

\put(26,21){\line(-1,0){24}}
\put(26,21){\line(0,1){24}}
\put(26,45){\line(-1,0){24}}
\put(2,21){\line(0,1){24}}

\put(22,25){\line(-1,0){16}}
\put(22,25){\line(0,1){16}}
\put(22,41){\line(-1,0){16}}
\put(6,25){\line(0,1){16}}

\put(6,25){\circle{1.5}}

\put(22,41){\circle{1.5}}

\put(14,33){\circle*{1.5}}


\put(26,21){\circle*{1.5}}

\put(4,8){$\bm{a}_{2}$}

\put(12,3){$\bm{a}_{1}$}

\end{picture}
\hspace{20\unitlength}
\begin{picture}(35,50)

\put(0,15){\vector(1,0){35}}
\put(10,0){\vector(0,1){50}}

\put(10,15){\thicklines\vector(-3,-2){6}}
\put(2,13){\thicklines\line(1,1){25}}
\multiput(3,14)(1,1){24}{\line(1,0){1}}

\put(3,10){\line(1,1){25}}

\put(10,15){\thicklines\vector(0,-1){10}}
\put(7,2){\thicklines\line(1,1){24}}
\multiput(8,3)(1,1){23}{\line(-1,0){1}}

\put(18,29){\line(-1,0){8}}
\put(18,29){\line(0,1){8}}
\put(10,29){\line(0,1){8}}
\put(10,37){\line(1,0){8}}

\put(22,25){\line(-1,0){16}}
\put(22,25){\line(0,1){16}}
\put(22,41){\line(-1,0){16}}
\put(6,25){\line(0,1){16}}

\put(6,25){\circle{1.5}}

\put(22,41){\circle{1.5}}

\put(14,33){\circle*{1.5}}

\put(18,29){\circle*{1.5}}


\put(4,8){$\bm{a}_{2}$}

\put(12,3){$\bm{a}_{1}$}

\end{picture}
\end{center}
\caption{Solutions to a location problem under equality constraints (left) and inequality constraints (right).}\label{F-LPEIC}
\end{figure}
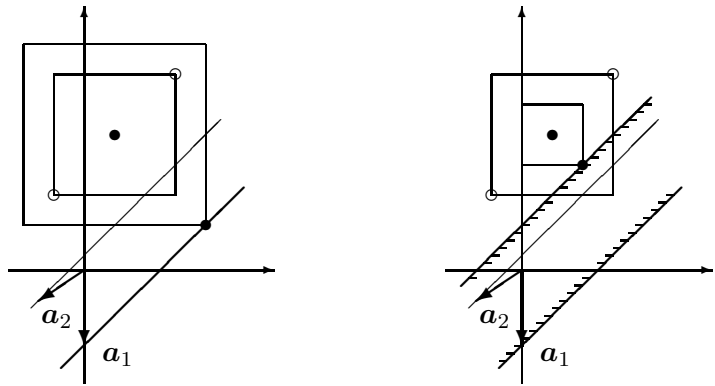

\subsection*{Acknowledgments}
The author is very grateful to an anonymous reviewer for valuable comments and constructive suggestions. In particular, he thanks the reviewer for pointing out inequality \eqref{I-DbAxDc} that provided a useful way to weaken assumptions on the underlying semifield as well as to simplify related proofs and presentation of results.

\bibliography{KrivulinN}

\end{document}